\documentclass[reqno,12pt]{amsproc}

\usepackage[top=3cm, bottom=3cm, left=2.54cm, right=2.54cm]{geometry}
\usepackage{amsmath,amsthm,amscd,amssymb,bbm,mathrsfs,latexsym,tikz,enumerate,verbatim,multirow,accents}
\usepackage[colorlinks,linkcolor=red,urlcolor=blue,citecolor=blue,pagebackref,hypertexnames=false]{hyperref}

\newcommand{\mR}{\mathbb{R}}

\newcommand{\mZ}{\mathbb{Z}}

\newlength{\dhatheight}

\def\Exp(#1){{\mathbb E}(#1)}
\def\implies{\Rightarrow}
\def\inpr#1,#2{\t \hbox{\langle #1 , #2 \rangle} \t}
\newcommand{\isoembed}{\stackrel{i}{\hookrightarrow}}
\def\ip<#1,#2>{\langle #1,#2 \rangle}

\def\norm#1{\left \Vert #1 \right \Vert}
\def\paren(#1){\left( #1 \right)}

\def\sparen(#1){\Bigl ( #1 \Bigr )}
\def\ssparen(#1){ (#1) }
\def\st{\thinspace : \thinspace}

\newcommand{\vece}{{\boldsymbol{e}}}

\newcommand{\vecx}{{\boldsymbol{x}}}
\newcommand{\vecz}{{\boldsymbol{z}}}
\newcommand{\vecy}{{\boldsymbol{y}}}
\newcommand{\vecu}{{\boldsymbol{u}}}

\newcommand{\bxi}{\boldsymbol{\xi}}

\newcommand{\zero}{\boldsymbol{0}}
\newcommand{\one}{\boldsymbol{1}}

\newcommand{\calA}{{\mathcal{A}}}

\newcommand{\calM}{{\mathcal{M}}}

\def\tv[#1,#2,#3]{{\begin{pmatrix} #1\\#2\\#3\end{pmatrix}}}
\def\fv[#1,#2,#3,#4]{{\begin{pmatrix} #1\\#2\\#3\\#4 \end{pmatrix}}}
\newcommand{\ellinf}[1]{\ell_{#1}^{\infty}}  

\numberwithin{equation}{section}
\theoremstyle{plain}
\newtheorem{theorem}{{\bf Theorem}}[section]
\newtheorem{lemma}[theorem]{{\bf Lemma}}
\newtheorem{corollary}[theorem]{Corollary}
\newtheorem{proposition}[theorem]{Proposition}

\theoremstyle{definition}
\newtheorem{definition}[theorem]{{\bf Definition}}

\newtheorem{question}[theorem]{Question}

\theoremstyle{remark}

\numberwithin{equation}{section}

\allowdisplaybreaks

\begin{document}
\title{Squaring the circle: embedding $S^1$ in $\ell_1$}

\author[Ian Doust]{Ian Doust}
\email{i.doust@unsw.edu.au}
\address{School of Mathematics and Statistics, University of New South Wales, Sydney, NSW 2052, Australia}

\author[Anthony Weston]{Anthony Weston}
\email{aweston2@andrew.cmu.edu}
\address{Mathematical and Physical Sciences, Carnegie Mellon University Qatar, Education City, Qatar}

\begin{abstract}
  Let $(S^1,\delta)$ be the unit circle endowed with the arc length metric. This paper concerns the
  question of which subsets of $(S^1,\delta)$ can be embedded isometrically into the sequence space
  $\ell_1$. It is well known that every finite subset of $S^1$ admits such an embedding, but the
  situation for infinite subsets, even including the whole circle, has perhaps been obscured by conflicting
  terminology in the literature. It is worth noting that the circle avoids the classical obstructions to
  isometric embeddability, since it is both of negative type and hypermetric.
  In this paper we show that if $X \subseteq S^{1}$ is a closed set and the Lebesgue measure of
  $X \cap X^{\ast}$ is positive, where $X^{\ast}$ is the antipodal set of $X$, then it is impossible to
  isometrically embed $(X, \delta)$ in $\ell_{1}$. As a result, no subset of $S^{1}$ with Lebesgue
  measure greater than $\pi$ can be isometrically embedded in $\ell_1$. Conversely, we show that any
  closed subset of $S^1$ whose intersection with any half-circle has measure zero can be isometrically
  embedded in $\ell_1$. These results have several consequences. They imply that
  the classical Banach space $L_1[0, 1]$, considered purely as a metric space, does not isometrically
  embed in $\ell_{1}$. Secondly, they yield a simple proof that a metric graph $(M, d)$
  embeds isometrically in $\ell_1$ if and only if it is a tree. In contrast to $S^{1}$,
  we show that some related metric spaces, such as the cylinder and the flat torus,
  contain finite subsets that cannot be isometrically embedded in $\ell_1$.
\end{abstract}

\subjclass[2020]{51K05, 30L05, 46B20, 46B04}
\maketitle

\section{Introduction}
The field of metric space embeddings has a long and extensive history. One important stream of ideas concerns determining necessary or sufficient conditions for a metric space $(X,d)$ to embed isometrically into one of the standard Banach spaces. Particular attention has been paid to embeddings into the sequence spaces $\ell_1$ and $\ell_2$, or function spaces such as $L_1$. 

One of the simplest non-Euclidean metric spaces is the unit circle $S^1$ endowed with the shortest path metric $\delta$. 
For various reasons the literature covering embedding properties of this simple space is not as clear as one might hope it to be. One reason for examining this space is that it avoids the standard obstructions for an isometric embedding in $\ell_1$. 
As will be noted in Section~\ref{Embed_S1}, it is easy to embed $(S^1,\delta)$ isometrically in $L_1(S^1)$, or to isometrically embed any \textit{finite} subset of $S^1$ in $\ell_1$. On the other hand, the whole space $(S^1,\delta)$ does not embed isometrically in $\ell_1$.
The majority of this paper concerns the question of just which subsets of $(S^1,\delta)$ can be embedded isometrically in $\ell_1$. We shall show that if  $X \subseteq S^1$ and the Lebesgue measure of the intersection of $X$ with its antipodal set $X^{\ast}$ is positive, then it is impossible to isometrically embed $(X,\delta)$ in $\ell_1$. This means that no subset with measure greater than $\pi$ can be isometrically embedded in $\ell_1$.  Conversely, we show that any closed subset of $S^1$ whose intersection with any half-circle has measure zero can be embedded isometrically in $\ell_1$.

These results have some interesting consequences. They provide a straightforward proof that not only is $L_1[0,1]$ not isometric to a subspace of $\ell_1$ as a Banach space, but this remains true even in the category of metric spaces.

In Section~\ref{Conseq} we prove some results about Riemannian manifolds of
negative type. In particular, we generalize a theorem of Hjorth, Kokkendorf and Markvorson on the $p$-negative type of product spaces where one of the factors is $S^1$. As a consequence, some simple metric spaces, such as the cylinder and the flat torus, contain finite subsets that cannot be isometrically embedded into $\ell_1$.

Finally, we shall note that the preceding results also yield a simple proof that a metric graph $(M, d)$ embeds isometrically into $\ell_1$ if and only if it is a tree.

In the next section we recall some basic facts and terminology concerning metric embeddings into classical Banach spaces, including $\ell_1$ and $L_1$.

\section{Background and notation}\label{sec:2}

Much of the work on $\ell_1$-embeddings has been motivated by problems arising in combinatorics and theoretical computer science and has therefore focused on finite metric spaces.
This has led to terminology that is sometimes awkward or easily misunderstood in the context of infinite spaces.
For instance, statements such as `the circle $S^1$ is $\ell_1$-embeddable' appear in several places in the literature. It is important then that we carefully define our terms. 

Given metric spaces $(X,d_X)$ and $(Y,d_Y)$ we shall write $(X,d_X) \isoembed (Y,d_Y)$ if there exists an isometric map $\gamma: (X,d_X) \to (Y,d_Y)$. We shall often omit the metrics when these are understood. As usual, for $p \ge 1$, we shall write $\ell_p^m$ for $\mathbb{R}^m$ endowed with the $\ell_p$ metric.

Consider the following conditions on a metric space $(X,d)$:
\begin{enumerate}
\item[(A)] $(X,d) \isoembed \ell_p^m$ for some $m$.
\item[(B)] For all finite subsets $Y \subseteq X$, $(Y,d) \isoembed \ell_p^m$ for some $m$.
\item[(C)] For all finite subsets $Y \subseteq X$, $(Y,d) \isoembed \ell_p$.
\item[(D)] $(X,d) \isoembed \ell_p$.
\end{enumerate}
For finite metric spaces, these four conditions are equivalent. The nontrivial implication is (D) $\implies$ (A), which is an immediate consequence of the following result (see \cite{Ba,Wi}).

\begin{theorem} Suppose $1 \le p < \infty$.
Let $X = \{\vecx_1,\dots,\vecx_n\}$ be a finite subset of $\ell_p$. Then there is a isometric map $\gamma: X \to \ell_p^m$ where $m = \frac{1}{2}n(n-1)$.
\end{theorem}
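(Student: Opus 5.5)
The plan is to study the finite metric $d(i,j)=\norm{\vecx_i-\vecx_j}_p^p$ through the convex cone it lives in, and then use Carathéodory's theorem. For each $k\in\mathbb{N}$ we define the ``coordinate metric'' $\rho_k(i,j)=|x_i(k)-x_j(k)|^p$. Then
\begin{equation*}
d(i,j)=\sum_{k=1}^\infty \rho_k(i,j).
\end{equation*}
We regard each $\rho_k$ as a vector in $\mR^N$, where $N=\frac12 n(n-1)$ is the number of unordered pairs $\{i,j\}$ with $i<j$. Let $\calM$ be the set of all vectors of the form $\bigl(|a_i-a_j|^p\bigr)_{i<j}$ with $(a_1,\dots,a_n)\in\mR^n$. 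These are exactly the ``one-dimensional $\ell_p$ metrics'' on $n$ points. The set $\calM$ is a cone, since scaling $a$ by $t$ scales the vector by $t^p$, and every $\rho_k$ lies in $\calM$.

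\textbf{Step 1: $d$ lies in a finitely generated cone.} We show that $d$ lies in the convex cone generated by $\calM$. The sum above may be infinite, but its partial sums lie in $\mathrm{cone}(\calM)$ and converge in $\mR^N$. So we need a closedness statement. We normalise $a$ by translation ($a_1=0$) and scaling ($\max|a_i|=1$); the normalised vectors form a compact set $K\subset\mR^N$ that avoids $0$. The cone generated by a compact set not containing $0$ is closed, provided $0$ is not in the convex hull of $K$. That proviso holds here because every vector in $K$ has nonnegative entries and is nonzero. Hence $d\in\mathrm{cone}(\calM)$.

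\textbf{Step 2: Carathéodory in $\mR^N$.} By the conic Carathéodory theorem, $d=\sum_{k=1}^{m}\rho^{(k)}$ with $m\le N$ and each $\rho^{(k)}\in\calM$. Positive coefficients are absorbed into $\calM$ because $\calM$ is a cone. For each $k$ we choose $a^{(k)}\in\mR^n$ realising $\rho^{(k)}$, and define
\begin{equation*}
\gamma(\vecx_i)=\bigl(a^{(1)}_i,\dots,a^{(m)}_i\bigr)\in\ell_p^m,
\end{equation*}
padding with zero coordinates if $m<N$. Then $\norm{\gamma(\vecx_i)-\gamma(\vecx_j)}_p^p=\sum_k\rho^{(k)}(i,j)=d(i,j)=\norm{\vecx_i-\vecx_j}_p^p$, so $\gamma$ is an isometry into $\ell_p^{N}$ with $N=\frac12n(n-1)$.

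\textbf{Main obstacle.} The delicate point is Step 1: passing from an infinite sum to membership in the cone, which requires the closedness argument. Alternatively, one can avoid closedness directly. Since $\sum_k \rho_k$ converges in the finite-dimensional space $\mR^N$, the vector $d$ lies in the closure of the cone generated by $\{\rho_k\}$. We would then use the fact that a cone generated by a compact set of nonnegative nonzero vectors is closed. This is the route I would write out, checking compactness of $K$ carefully.
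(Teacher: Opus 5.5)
Your proof is correct. It is essentially the standard convex-cone plus Carathéodory argument of Ball; the paper gives no proof of its own and simply cites Ball and Witsenhausen. Your Step 1 handles the one delicate point correctly: the cone in $\mR^{n(n-1)/2}$ generated by $p$-th powers of line metrics is closed, because it is generated by the compact set $K$ and $0\notin\mathrm{conv}(K)$.

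One small point: the ``alternative'' in your last paragraph does not actually avoid the closedness step. You still need $\mathrm{cone}(K)$ to be closed, since $\mathrm{cone}\{\rho_k\}$ by itself need not be. Nothing in your argument depends on that paragraph, though.
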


A consequence of this is that conditions (B) and (C) are in fact equivalent for spaces of any cardinality. The finite metric spaces that satisfy the above conditions are precisely those known as cut metrics; we refer the reader to \cite{DL} for a detailed discussion.

For infinite metric spaces, conditions (A), (B) and (D) are not equivalent. We shall largely follow \cite[Section 3]{DL} in our terminology, with the addition of a new term for condition~(B).

\begin{definition}\label{lp-embed-def}
Let $p \ge 1$. A metric space $(X,d)$ is said to be:

\begin{enumerate}
\item $\ell_p^m$-\textit{embeddable} if $(X,d) \isoembed \ell_p^m$.
\item \textit{$\ell_p$-embeddable} if it satisfies (A).
\item \textit{finitely $\ell_p$-embeddable} if it satisfies (B) (or equivalently (C)).
\item \textit{$\ell_p^\infty$-embeddable} if it satisfies (D).
\item \textit{$L_p$-embeddable} if $(X,d) \isoembed L_p(\Omega,\calA,\mu)$ for some measure space $(\Omega,\calA,\mu)$.
\end{enumerate}
\end{definition}

The reader should note that some of the literature uses $L^1$-embeddable for what we call  $\ell_1$-embeddable here; see for example \cite{AD,MM}.  

Our primary concern in this paper is in the case $p = 1$.
As we shall see, $(S^1,\delta)$ is finitely $\ell_1$-embeddable and $L_1$-embeddable, but neither $\ell_1$-embeddable nor $\ell_1^\infty$-embeddable. Clearly $\ell_1$ is $L_1$-embeddable. A little less obvious is the following.

\begin{theorem}\label{OsL1} \cite[Lemma 1.24]{Os} For any measure space $(\Omega,\calA,\mu)$, $L_1(\Omega,\calA,\mu)$ is finitely $\ell_1$-embeddable.
\end{theorem}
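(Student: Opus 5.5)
We prove that every finite subset $\{f_1,\dots,f_n\}$ of $L_1(\Omega,\calA,\mu)$ is isometric to a subset of $\ell_1^m$ for some $m$; by the remark after the first theorem of this section, this is condition (B), hence also (C). The plan is to reduce to simple functions, and then use the fact that a simple function space is literally a weighted $\ell_1^N$.

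\emph{Step 1 (reduction to simple functions).} Let $D = \min_{i\ne j}\|f_i-f_j\|_1$ if $n\ge 2$. Since simple integrable functions are dense in $L_1$, for each $\varepsilon>0$ we can choose simple integrable $g_i$ with $\|f_i-g_i\|_1<\varepsilon$. Let $\calA_0$ be the finite $\sigma$-algebra generated by the level sets of all the $g_i$, and let $A_1,\dots,A_N$ be the atoms of $\calA_0$ with $0<\mu(A_k)<\infty$. Each $g_i$ is constant on each $A_k$ and vanishes off $\bigcup_k A_k$. Define the map $T:\mathrm{span}\{1_{A_k}\}\to\ell_1^N$ by $T(\sum_k c_k 1_{A_k}) = (c_k\mu(A_k))_k$. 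Then $T$ is a linear isometry, so $\{g_i\}$ embeds isometrically into $\ell_1^N$. This by itself gives only approximate embeddings of the $f_i$, so exactness has to come from a limiting or structural argument.

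\emph{Step 2 (exactness via a finite-dimensional cone).} Rather than pass to limits, argue through the cut cone. For $h\in L_1$ write
\[
|h(\omega)| = \int_{\mR} \bigl|1_{\{h>t\}}(\omega) - 1_{\{0>t\}}\bigr|\,dt.
\]
Applying this with $h=f_i-f_j$ and using the layer-cake representation $f_i(\omega) = \int (1_{\{f_i(\omega)>t\}} - 1_{\{0>t\}})\,dt$, one gets
\[
\|f_i-f_j\|_1 = \int_\Omega\int_{\mR} \bigl|1_{\{f_i>t\}}(\omega)-1_{\{f_j>t\}}(\omega)\bigr|\,dt\,d\mu(\omega).
\]
For each $(\omega,t)$, the set $S_{\omega,t} = \{i : f_i(\omega)>t\}$ is a subset of $\{1,\dots,n\}$. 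The integrand equals $\delta_{S_{\omega,t}}(i,j)$, where $\delta_S$ is the cut semimetric defined by $\delta_S(i,j)=1$ if exactly one of $i,j$ lies in $S$, and $0$ otherwise. Grouping the integral by which of the finitely many sets $S$ occurs gives
\[
\|f_i-f_j\|_1 = \sum_{S\subseteq\{1,\dots,n\}} \lambda_S\,\delta_S(i,j), \qquad \lambda_S = (\mu\times\text{Leb})\{(\omega,t): S_{\omega,t}=S\}.
\]
Here $\lambda_S\ge 0$. Only the pairs $(\omega,t)$ with $S_{\omega,t}\ne\emptyset,\{1,\dots,n\}$ contribute, and their total measure is at most $\sum_{i<j}\|f_i-f_j\|_1<\infty$, so every $\lambda_S$ with $S$ a proper nonempty subset is finite. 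The sets $S=\emptyset$ and $S=\{1,\dots,n\}$ may have infinite $\lambda_S$, but $\delta_S\equiv 0$ for them, so we discard them.

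\emph{Step 3 (the embedding).} Index coordinates by the proper nonempty subsets $S$, so $m\le 2^n-2$. Send $f_i$ to the vector $x_i$ with coordinates $(x_i)_S = \lambda_S 1_S(i)$. Then
\[
\|x_i-x_j\|_1 = \sum_S \lambda_S\,\delta_S(i,j) = \|f_i-f_j\|_1,
\]
so $i\mapsto x_i$ is an isometry into $\ell_1^m$. This makes Step 1 unnecessary.

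The main obstacle is justifying the measurability and Fubini step in Step 2. The sets $\{(\omega,t): f_i(\omega)>t\}$ are product-measurable, and we can work on the $\sigma$-finite support of $f_1,\dots,f_n$, so Tonelli applies to the nonnegative integrand. One must also verify the pointwise identity $|a-b| = \int_{\mR}|1_{\{a>t\}}-1_{\{b>t\}}|\,dt$ for real $a,b$. This holds because the integrand is exactly the indicator of $t$ lying between $b$ and $a$ (with one endpoint included), whose length is $|a-b|$.
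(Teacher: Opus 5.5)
The paper gives no proof of this statement; it is quoted from Ostrovskii, so there is no internal argument to compare with. Taken on its own terms, your Steps~2--3 are a correct and complete proof, and Step~1 can be deleted.

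The key points all check out:
\begin{itemize}
\item For real $a,b$ one has $|a-b|=\int_{\mR}|1_{\{a>t\}}-1_{\{b>t\}}|\,dt$.
\item Off the $\sigma$-finite set $\bigcup_i\{f_i\neq 0\}$, every $S_{\omega,t}$ is $\emptyset$ or $\{1,\dots,n\}$. So restricting to that set and applying Tonelli is legitimate.
\item For a proper nonempty $S$ and any $i\in S$, $j\notin S$, one has $\lambda_S\le\|f_i-f_j\|_1<\infty$.
\item The map $(x_i)_S=\lambda_S 1_S(i)$ is then an exact isometry into $\ell_1^m$.
\end{itemize}

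There is one expository slip. The phrase ``applying this with $h=f_i-f_j$'' does not produce your displayed formula for $\|f_i-f_j\|_1$; it would give the integrand $|1_{\{f_i-f_j>t\}}-1_{\{0>t\}}|$. The formula actually comes from the two-variable identity you verify in your last paragraph, so the first display and the layer-cake sentence should be removed.

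You can also avoid product measures entirely. For fixed $\omega$, the set $\{t:S_{\omega,t}=S\}$ is an interval of length $\bigl(\min_{i\in S}f_i(\omega)-\max_{j\notin S}f_j(\omega)\bigr)^+$. So $\lambda_S$ can be defined as the $\mu$-integral of this measurable function, with no $\sigma$-finiteness needed.

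On the choice of route: your threshold decomposition is the classical argument placing finite $L_1$-metrics in the cut cone. Your Step~1 was heading toward the alternative, namely approximation by simple functions followed by a limit. With tools already in this paper, that route can be finished as follows. Move each approximating configuration into $\ell_1^{n(n-1)/2}$ using the Ball--Witsenhausen theorem at the start of Section~\ref{sec:2}. Translate $g_1$ to $\zero$, and extract a convergent subsequence in that fixed finite-dimensional space.

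That version is quicker to write, but it rests on a nontrivial dimension bound, or on closedness of the finitely generated cut cone. Yours is elementary, needs no limiting argument, and gives explicit coordinates. Your bound $m\le 2^n-2$ drops to $2^{n-1}-1$ if you merge $S$ with its complement, since $\delta_S=\delta_{S^c}$. It drops further to $\binom n2$ by Carath\'eodory's theorem for cones in the $\binom n2$-dimensional space of distance vectors.
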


Many of the conditions which arise in this area are `finitely determined', which is to say that, as in conditions (B) and (C), they depend on properties of arbitrary finite subsets of a space. Examples of this are the conditions of being of $p$-negative type, or being hypermetric. 

\begin{definition}\label{type-defs2}
Let $(X,d)$ be a finite metric space with
$X = \{ x_{1}, x_{2}, \ldots x_{n} \}$, and let
$p \geq 0$.
\begin{enumerate}
\item We say that the space $(X,d)$ is of \textit{$p$-negative type} if
   \begin{equation}\label{Dxixi2} \sum_{i,j} d(x_i,x_j)^p \xi_i \xi_j \le 0
   \end{equation}
whenever $\bxi = (\xi_1,\dots,\xi_n) \in \mR^n$ satisfies $\sum_i \xi_i = 0$. If the inequality in (\ref{Dxixi2}) is strict unless $\bxi = \zero$, then $(X,d)$ is said to be of \textit{strict $p$-negative type}.

\item We say that the space $(X,d)$ is $p$-\textit{hypermetric} if (\ref{Dxixi2}) holds whenever $\bxi \in \mZ^n$ satisfies $\sum_i \xi_i  =1$.
\end{enumerate}
\end{definition}

For a general metric space $(X,d)$, we say that it is of $p$-\textit{negative type}, of strict $p$-\textit{negative type}, or $p$-\textit{hypermetric} if every finite subset of $(X,d)$ has the corresponding property. As is customary, a metric space of $1$-negative type is usually just said to be of \textit{negative type}. Similarly, a metric space that is $1$-hypermetric is usually just said to be \textit{hypermetric}.

For any given metric space $(X, d)$, the set of all $p$ for which $(X, d)$ has $p$-negative type is a (possibly degenerate) closed interval of the form $[0, \wp(X, d)]$, where $\wp(X, d) \geq 0$, or it is $[0, \infty)$, in which case $\wp(X, d)$ is taken to be $\infty$. Indeed all possible values $0 \le \wp(X,d) \le \infty$ are obtainable. The fact that $p$-negative type holds on closed intervals is due to Schoenberg \cite{Sc2}.
The first example of a metric space $(X, d)$ for which $\wp(X, d) = 0$ was given some thirty years later by Enflo \cite{En2}. The quantity $\wp(X,d)$ is known as the \textit{supremal $p$-negative type} of $(X, d)$. For finite metric spaces this quantity can be determined using S{\'a}nchez's formula.  (An alternative formula is given in \cite{Rob}.)

\begin{theorem}\label{Sanchez} \cite[Corollary 2.4]{San}
  Let $(X,d)$ be a finite metric space with $X = \{ x_{1}, x_{2}, \ldots x_{n} \}$. Then
  \[ \wp(X,d) = \min\{p \ge 0 \st \det(D_p) = 0 \text{ or }
             \ip<D_p^{-1} \one,\one> = 0 \}, \]
where $D_p = (d(x_i,x_j)^p)_{i,j = 1}^n$ is the $p$-distance matrix of the space and $\one = (1,1,\dots,1)^T \in \mR^n$.
\end{theorem}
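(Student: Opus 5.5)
Sánchez's formula is cited from \cite{San}, but a self-contained argument runs as follows. Write $D_p$ for the $p$-distance matrix and consider the quadratic form $Q_p(\bxi) = \ip<D_p \bxi,\bxi>$ restricted to the hyperplane $H = \{\bxi \st \ip<\bxi,\one> = 0\}$. By Definition~\ref{type-defs2}, $(X,d)$ has $p$-negative type exactly when $Q_p \le 0$ on $H$. For $p=0$ (and any $p$ near $0$ if we interpret $d^0$ off the diagonal as $1$) we have $D_0 = J - I$, where $J$ is the all-ones matrix. On $H$ this gives $Q_0(\bxi) = -\norm{\bxi}^2 < 0$, so $Q_p$ is strictly negative definite on $H$ for small $p$. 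Since the entries of $D_p$ depend continuously on $p$, the set of $p$ for which $Q_p$ is negative definite on $H$ is open. Schoenberg's theorem says that the negative type set is the interval $[0,\wp(X,d)]$. So $\wp(X,d)$ is the first $p$ at which $Q_p|_H$ ceases to be negative definite, which is the first $p$ at which $Q_p|_H$ becomes degenerate. (This assumes $n \ge 2$ and $\wp < \infty$; if $\wp=\infty$, both sides of the formula should read $\infty$, with the usual convention $\min\emptyset=\infty$.)

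The core step is to show that for each $p$,
\[ Q_p|_H \text{ is degenerate} \iff \det(D_p) = 0 \text{ or } \ip<D_p^{-1}\one,\one> = 0 .\]
Degeneracy means there is a nonzero $\bxi \in H$ with $D_p\bxi = \lambda \one$ for some scalar $\lambda$.
\begin{enumerate}
\item If $\lambda = 0$, then $D_p$ is singular.
\item If $\lambda \ne 0$ and $D_p$ is invertible, then $\bxi = \lambda D_p^{-1}\one$, and the condition $\bxi\in H$ becomes $\ip<D_p^{-1}\one,\one>=0$.
\end{enumerate}
Conversely:
\begin{enumerate}
\item If $\ip<D_p^{-1}\one,\one>=0$, take $\bxi = D_p^{-1}\one$.
\item If $D_p$ is singular with kernel vector $\bxi$, either $\bxi\in H$ already, or we must argue differently.
\end{enumerate}
That last case is the subtle point. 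Singularity of $D_p$ need not by itself mean that $Q_p|_H$ is degenerate. What we need is only that, at the first $p$ where $\det D_p=0$, negative definiteness on $H$ has already failed by then.

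To handle this I would use an interlacing/inertia argument. If $Q_p$ is negative definite on the $(n-1)$-dimensional space $H$, then $D_p$ has at least $n-1$ negative eigenvalues. Since the trace of $D_p$ is $0$ and $D_p \ne 0$, the remaining eigenvalue is positive. Hence $\det D_p \ne 0$ throughout the interval where $Q_p|_H$ is negative definite. It follows that neither condition in the set can occur for $p<\wp$: singularity is excluded by this inertia argument, and $\ip<D_p^{-1}\one,\one>=0$ would produce a degeneracy on $H$.

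At $p=\wp$ itself, $Q_\wp|_H$ is negative semidefinite but not definite. This is by closedness of the negative type interval, together with the openness shown in the first paragraph. So there is a nonzero $\bxi\in H$ with $Q_\wp(\bxi)=0$. Because the form is semidefinite on $H$, such a $\bxi$ lies in the radical of $Q_\wp|_H$, so $D_\wp\bxi = \lambda\one$. By the case analysis above, either $\det D_\wp=0$ or $\ip<D_\wp^{-1}\one,\one>=0$. Thus $\wp$ is exactly the minimum of the displayed set.

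The main obstacle I expect is justifying that $\wp$ is the first point of degeneracy. This rests on Schoenberg's interval property, which forbids $p$-negative type from reappearing after it fails, and on making sure the minimum is actually attained. Both follow from the continuity of $p\mapsto D_p$ and the closedness of the set of $p$ where $Q_p|_H\le 0$.
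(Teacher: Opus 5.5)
Your inertia argument and your radical analysis at $p=\wp$ are correct. They show that $\wp$ lies in the displayed set when $\wp<\infty$, and that no $p$ at which $Q_p|_H$ is negative definite can lie in it.

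The gap is the inference ``the negative type set is $[0,\wp(X,d)]$, so $\wp(X,d)$ is the first $p$ at which $Q_p|_H$ ceases to be negative definite.'' Openness and the computation at $p=0$ give negative definiteness on $[0,p_1)$, where $p_1\le\wp$ is the first failure. Nothing you invoke rules out $p_1<\wp$, that is, a value at which $Q_{p_1}|_H$ is semidefinite and degenerate while non-strict negative type persists up to $\wp$. Schoenberg's interval property only forbids negative type from reappearing once lost, which is not the issue. It says nothing about strictness being lost early, and neither do continuity or closedness. This matters: by your own case analysis such a $p_1$ would belong to the displayed set, making its minimum $p_1\ne\wp$. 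Likewise, when $\wp=\infty$ you have no argument that the set is empty.

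The missing ingredient is the theorem of Li and Weston that a finite metric space of $p$-negative type has strict $q$-negative type for every $0\le q<p$. (The paper itself gives no proof of this statement; it only cites S\'anchez.) This theorem comes from Schoenberg's integral representation, not from his interval theorem. Take $0<q<p$ and set $\alpha=q/p\in(0,1)$. Use $t^\alpha=c_\alpha\int_0^\infty(1-e^{-st})s^{-1-\alpha}\,ds$ with $c_\alpha>0$, and note that $\sum_{i,j}\xi_i\xi_j=0$ for $\bxi\in H$. Then
\[
  \sum_{i,j} d(x_i,x_j)^q\,\xi_i\xi_j
  = -c_\alpha\int_0^\infty \Bigl(\sum_{i,j} e^{-s\,d(x_i,x_j)^p}\,\xi_i\xi_j\Bigr)\,s^{-1-\alpha}\,ds .
\]
The inner sum is nonnegative for each $s>0$, because $d^p$ is conditionally negative definite and so $e^{-s\,d^p}$ is positive definite. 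It tends to $\norm{\bxi}^2>0$ as $s\to\infty$, because $d(x_i,x_j)>0$ for $i\ne j$. Hence the left side is strictly negative for $\bxi\ne\zero$. This gives negative definiteness of $Q_p|_H$ on all of $[0,\wp)$, and with that the rest of your proof goes through as written.
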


The negative type and hypermetric properties provide the most familiar classical obstructions to the existence of $\ell_1$ embeddings. As is well-known, $\ell_1$ and $L_1[0,1]$ are of negative type and hypermetric, with $\wp(\ell_1) = \wp(L_1[0,1]) = 1$. Combining these facts with Theorem~\ref{OsL1} and standard properties of negative type and hypermetric spaces yields the following theorem.

\begin{theorem}\label{Properties-Scale} Let $(X,d)$ be a metric space. Then we have the following chain of implications:
\begin{align*}
\text{$(X,d)$ is $\ell_1$-embeddable}
& \implies \text{$(X,d)$ is $\ell_1^\infty$-embeddable} \\
& \implies \text{$(X,d)$ is $L_1$-embeddable} \\
& \implies \text{$(X,d)$ is finitely $\ell_1$-embeddable} \\
& \implies  \text{$(X,d)$ is hypermetric} \\
& \implies  \text{$(X,d)$ is of negative type}.
\end{align*}
\end{theorem}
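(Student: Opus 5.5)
The plan is to verify the five implications one at a time. Each follows from a result already quoted or from a standard fact.

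\emph{$\ell_1$-embeddable $\implies$ $\ell_1^\infty$-embeddable.} By Definition~\ref{lp-embed-def}, $\ell_1$-embeddability means condition (A), i.e.\ $(X,d)\isoembed \ell_1^m$ for some $m$. The map $\ell_1^m \to \ell_1$ that pads a vector with zeros is a linear isometry. Composing the two maps gives (D).

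\emph{$\ell_1^\infty$-embeddable $\implies$ $L_1$-embeddable.} We have $\ell_1 = L_1(\mathbb{N}, 2^{\mathbb{N}}, \text{counting measure})$. So an isometry $X \to \ell_1$ is already an isometry into an $L_1$ space.

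\emph{$L_1$-embeddable $\implies$ finitely $\ell_1$-embeddable.} Let $\gamma: X \to L_1(\Omega,\calA,\mu)$ be an isometry and let $Y\subseteq X$ be finite. Then $(Y,d)$ is isometric to the finite subset $\gamma(Y)$ of $L_1(\Omega,\calA,\mu)$. By Theorem~\ref{OsL1}, $\gamma(Y)\isoembed \ell_1^m$ for some $m$, which is condition (B).

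\emph{Finitely $\ell_1$-embeddable $\implies$ hypermetric.} Both properties are finitely determined, so it suffices to treat a finite $X=\{x_1,\dots,x_n\}$ with an isometry $x_i\mapsto \vecx_i\in\ell_1^m$. For $\bxi\in\mZ^n$ with $\sum\xi_i=1$, the quantity $\sum_{i,j}\|\vecx_i-\vecx_j\|_1\xi_i\xi_j$ splits as a sum over coordinates. It therefore suffices to prove the inequality for real numbers $t_1,\dots,t_n$ with the metric $|t_i-t_j|$. For this I use the layer-cake identity
\[ |s-t| = \int_{\mR} \bigl|\one_{(-\infty,u)}(s)-\one_{(-\infty,u)}(t)\bigr|\,du, \]
which writes the line metric as a nonnegative combination of cut semimetrics. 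Fix a cut $S\subseteq X$ and set $a=\sum_{i\in S}\xi_i\in\mZ$. Then
\[ \sum_{i,j}\delta_S(x_i,x_j)\xi_i\xi_j = 2a(1-a)\le 0, \]
because $a$ is an integer. Integrating over $u$ and summing over coordinates gives the hypermetric inequality. This step is the only real computation in the proof, and the integrality of $a$ is exactly what makes it work.

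\emph{Hypermetric $\implies$ negative type.} Again work with a finite $X$ and a real vector $\bxi$ with $\sum\xi_i=0$. By homogeneity and continuity it is enough to treat rational $\bxi$, and then, after scaling, $\bxi\in\mZ^n$. Append to $X$ a duplicate point $x_{n+1}=x_1$ with weight $1$. The enlarged configuration is only a semimetric space, but the hypermetric inequality still holds for it, since it holds for $X$ with weight $\xi_1+1$ on $x_1$. Alternatively, apply hypermetricity to $(k\bxi)+\vece_1$ for $k\in\mathbb{N}$ and expand:
\[ k^2\sum_{i,j}d(x_i,x_j)\xi_i\xi_j + 2k\sum_j d(x_1,x_j)\xi_j \le 0. \]
Dividing by $k^2$ and letting $k\to\infty$ gives $\sum_{i,j} d(x_i,x_j)\xi_i\xi_j\le 0$.

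Chaining the five implications proves the theorem. I expect the hypermetric step to be the main obstacle. The alternative is simply to cite \cite{DL} for the standard facts that cut-cone metrics are hypermetric and that hypermetric spaces are of negative type.
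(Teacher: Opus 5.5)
Your proof is correct and follows the same chain as the paper, which simply invokes Theorem~\ref{OsL1} together with the well-known facts that $\ell_1$ is hypermetric and that hypermetric spaces are of negative type. The difference is that you actually prove those two facts: by splitting into coordinates and writing the line metric as an integral of cut semimetrics with $2a(1-a)\le 0$ for $a\in\mZ$, and by applying hypermetricity to $k\bxi+\vece_1$ and letting $k\to\infty$.

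One small point concerns the duplicate-point sentence in the last step. It does not by itself give the negative type inequality, since it only yields $\sum_{i,j}d(x_i,x_j)\xi_i\xi_j+2\sum_j d(x_1,x_j)\xi_j\le 0$. Adding the same inequality for $-\bxi$ would finish it without any limit. So either complete that remark this way or drop it in favour of your $k\bxi+\vece_1$ argument, which is fine as written.
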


It is well known that certain embedding properties are also finitely determined. The following result is stated for $\ell_p^m$ spaces in \cite{DL}, although the proof, which depends on a compactness argument due to Malitz and Malitz \cite{MM}, works for any real finite-dimensional normed space.

\begin{theorem} A metric space $(X,d)$ embeds isometrically in a real finite-dimensional normed space $(Z, \| \cdot \|)$ if and only if each finite subset of $(X, d)$ embeds isometrically in that space. 
\end{theorem}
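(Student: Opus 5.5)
The nontrivial direction is the converse. If $\gamma:(X,d)\to (Z,\|\cdot\|)$ is an isometric embedding, then its restriction to any finite $Y\subseteq X$ is an isometric embedding of $(Y,d)$. So I will assume every finite subset of $X$ embeds isometrically in $Z$ and build an embedding of all of $X$ by a compactness argument in the style of Malitz and Malitz.

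\textbf{Normalisation.} Assume $X\neq\emptyset$ and fix a base point $x_0\in X$. Since translations of $Z$ are isometries, every finite $Y\ni x_0$ has an isometric embedding $\gamma_Y$ with $\gamma_Y(x_0)=0$. For such an embedding, each $x\in Y$ satisfies $\|\gamma_Y(x)\|=d(x,x_0)$, so $\gamma_Y(x)$ lies in the closed ball $B_x=\{z\in Z:\|z\|\le d(x,x_0)\}$. Because $Z$ is a real finite-dimensional normed space, it is linearly homeomorphic to $\mR^m$, and each $B_x$ is compact.

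\textbf{Tychonoff.} Form the product $K=\prod_{x\in X}B_x$, which is compact by Tychonoff's theorem. For each finite $Y\subseteq X$ with $x_0\in Y$, let $F_Y\subseteq K$ be the set of $f\in K$ with $f(x_0)=0$ and
\[
\|f(x)-f(y)\|=d(x,y)\quad\text{for all } x,y\in Y.
\]
Three facts are needed.
\begin{enumerate}
\item $F_Y$ is closed. It is an intersection of finitely many sets, each defined by an equation between continuous functions of finitely many coordinates, because the norm is continuous and coordinate projections are continuous.
\item $F_Y$ is nonempty. Take $\gamma_Y$ on $Y$ and set $f(x)=0\in B_x$ for $x\notin Y$.
\item The family has the finite intersection property. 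Since $F_{Y_1\cup\dots\cup Y_k}\subseteq F_{Y_1}\cap\dots\cap F_{Y_k}$, any finite intersection contains a nonempty set.
\end{enumerate}
By compactness, $\bigcap_Y F_Y$ contains some $f$. Any pair $x,y\in X$ lies in the finite set $Y=\{x_0,x,y\}$, so $\|f(x)-f(y)\|=d(x,y)$. Hence $f:X\to Z$ is the required isometric embedding.

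\textbf{Where the hypotheses are used.} There is no real obstacle here. The only point needing care is the compactness of the balls $B_x$, which is exactly where finite-dimensionality of $Z$ enters; the argument fails in $\ell_1$ itself. The normalisation $\gamma_Y(x_0)=0$ is what keeps the coordinates bounded, and hence makes the product compact.
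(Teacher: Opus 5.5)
Your proof is correct and follows the same route as the paper. The paper gives no proof of its own: it defers to Deza--Laurent and remarks that their compactness argument, due to Malitz and Malitz, carries over to any real finite-dimensional normed space. Your Tychonoff argument, with the base-point normalisation $\gamma_Y(x_0)=0$ keeping every coordinate in the compact ball $B_x$, is exactly such an argument.
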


In 1966, Bretagnolle, Dacunha-Castelle and Krivine \cite[page 252]{BDK} reduced the study of $L_{p}$-embeddable metric spaces to the finite case. We remark that the case $p = 2$ is due to Schoenberg \cite{Sc1}.

\begin{theorem}\label{BDK-Thm}
Suppose that $p \ge 1$ and that $(X, d)$ is a metric space. Then $(X, d)$ is $L_{p}$-embeddable if and only if every finite subset of $(X, d)$ is
$L_{p}$-embeddable.
\end{theorem}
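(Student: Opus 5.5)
The forward implication is trivial: a finite subset of an $L_p$-embeddable space embeds by restricting the embedding. For the converse I would use an ultraproduct argument. The idea is to build a single embedding of all of $X$ by gluing together embeddings of its finite subsets along an ultrafilter, and then to recognise the resulting Banach space as an $L_p$ space.

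First I set up the index set. Fix a base point $x_0 \in X$ and let $\mathcal{F}$ be the set of finite subsets of $X$ that contain $x_0$, directed by inclusion. Choose an ultrafilter $\mathcal{U}$ on $\mathcal{F}$ that contains every tail set $\{G \in \mathcal{F} : G \supseteq F\}$. These sets have the finite intersection property, so such a $\mathcal{U}$ exists.

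Next I choose normalised finite embeddings. For each $F \in \mathcal{F}$ the hypothesis gives a measure space $(\Omega_F,\calA_F,\mu_F)$ and an isometric map $\gamma_F : F \to L_p(\mu_F)$. Translating by the constant vector $-\gamma_F(x_0)$, I may assume $\gamma_F(x_0) = 0$. Extend $\gamma_F$ to all of $X$ by setting $\gamma_F(x) = 0$ for $x \notin F$.

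Then I define the map on the ultraproduct. Let $Z = \bigl(\prod_F L_p(\mu_F)\bigr)_{\mathcal{U}}$ be the Banach-space ultraproduct, and set $\gamma(x) = [(\gamma_F(x))_F]$. The family $(\gamma_F(x))_F$ is bounded: for $F \ni x$ we have $\norm{\gamma_F(x)} = d(x,x_0)$, and otherwise the entry is $0$. Hence $\gamma(x)$ is a well-defined element of $Z$. For $x,y \in X$, the set of $F$ containing both $x$ and $y$ lies in $\mathcal{U}$, and on that set $\norm{\gamma_F(x)-\gamma_F(y)} = d(x,y)$. Taking the limit along $\mathcal{U}$ gives $\norm{\gamma(x)-\gamma(y)}_Z = d(x,y)$, so $\gamma$ is an isometry into $Z$.

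The main obstacle is to show that $Z$ is itself (isometric to) an $L_p(\Omega,\calA,\mu)$. My plan for this step is as follows.
\begin{enumerate}
\item Give $Z$ the coordinatewise lattice operations. It is then a Banach lattice, because the lattice operations are Lipschitz and so pass to the quotient defining the ultraproduct.
\item Check that $Z$ satisfies the abstract $L_p$ identity $\norm{u+v}^p = \norm{u}^p + \norm{v}^p$ whenever $|u| \wedge |v| = 0$. Disjoint elements of $Z$ can be represented by families that are disjoint coordinatewise: replace $u_F, v_F$ by $u_F - (|u_F| \wedge |v_F|)\operatorname{sgn}(u_F)$ and similarly for $v_F$, which changes the families only by $\mathcal{U}$-null sequences. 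In each $L_p(\mu_F)$ the identity holds exactly for disjoint functions, and it survives the limit along $\mathcal{U}$.
\item Invoke the Bohnenblust--Nakano--Kakutani representation theorem: an abstract $L_p$ space (for $1 \le p < \infty$) is lattice isometric to $L_p(\Omega,\calA,\mu)$ for some measure space.
\end{enumerate}
Composing $\gamma$ with this lattice isometry gives the required embedding of $X$.

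If I wanted to avoid the representation theorem, an alternative for $p \le 2$ would be a $p$-stable process construction. The function $\exp(-d(x,y)^p)$ is positive definite on every finite subset of $X$, hence on all of $X$. This yields a Gaussian or stable process whose increments realise $d$ in some $L_q$, and $L_q$ then embeds in $L_p$. However, this only covers part of the range $p \ge 1$, so I would rely on the ultraproduct route.
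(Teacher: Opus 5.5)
Your ultraproduct argument is correct. The paper gives no proof of this statement; it quotes it from Bretagnolle, Dacunha-Castelle and Krivine. Your route is a standard way to establish that cited result: the ultraproduct of the $L_p(\mu_F)$ along a tail ultrafilter is an abstract $L_p$ space, hence an $L_p(\mu)$ by the Bohnenblust--Nakano--Kakutani theorem. The details check out:
\begin{itemize}
\item the normalisation $\gamma_F(x_0)=0$ is exactly what makes the families bounded;
\item your disjointification moves each coordinate by $\||u_F|\wedge|v_F|\|_p$, which is $\mathcal{U}$-null precisely because $|u|\wedge|v|=0$ in $Z$;
\item the representation theorem needs only $p$-additivity on disjoint elements, and this includes $p=1$.
\end{itemize}

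You should drop the closing aside, because it is wrong rather than merely partial. By Schoenberg's theorem, positive definiteness of $\exp(-t\,d^p)$ for all $t>0$ is equivalent to $(X,d)$ having $p$-negative type. You need all $t$ here, since $L_p$-embeddability is invariant under rescaling $d$. This is the same as $(X,d^{p/2})$ embedding in $L_2$. A Gaussian process built from this kernel therefore has increments of size comparable to $d^{p/2}$, not $d$. A $p$-stable process realising $d$ would require the values $\|\sum_x t_x\gamma(x)\|_p$ for all linear combinations, and the metric alone does not determine these. For $p=1$ the criterion is just negative type, which is strictly weaker than $L_1$-embeddability. For example, the five-point metric with $d(b_i,b_j)=2$, $d(a_1,a_2)=2$ and $d(a_i,b_j)=6/5$ is of negative type but violates the pentagonal inequality, so by the chain in Theorem~\ref{Properties-Scale} it is not $L_1$-embeddable. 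The Banach-space criterion of Bretagnolle--Dacunha-Castelle--Krivine relies on the linear structure and does not transfer to general metric spaces.
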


The terminology here is delicate insofar as the underlying measure spaces may change. In particular (as we shall see below) even if there is some fixed measure space $(\Omega,\mathcal{A}, \mu)$ such that every finite subset of a metric space $(X,d)$ embeds isometrically in $L_p(\Omega,\calA, \mu)$,  it may be that $(X,d)$ itself does not embed isometrically in $L_p(\Omega,\calA, \mu)$.

Thus, while the conditions of a metric space being $\ell_{1}^{m}$-embeddable or $L_{1}$-embeddable (and of course of being finitely $\ell_1$-embeddable) are finitely determined, the condition of a metric space being $\ellinf{1}$-embeddable is not finitely determined.
One source of easy obstructions arise from cardinality.
The space $\ell_1[0,1]$, that is $L_1([0,1],\mu)$ with $\mu$ being counting measure, is clearly $\ell_1$-embeddable, but not $\ellinf{1}$-embeddable. 

One might avoid this particular obstruction by restricting one's attention to metric spaces $(X,d)$ which are separable. With this in mind we make the following definition.

\begin{definition}
Let $Z$ be a Banach space. We say that $Z$ has the \textit{separable finite embedding property} if, whenever every finite subset of a separable metric space $(X, d)$ embeds isometrically in $Z$, the entire space $(X, d)$ embeds isometrically in Z.
\end{definition}

It follows immediately that every finite-dimensional Banach space has this property. A more interesting example is provided by $L_1[0,1]$.

\begin{theorem}
$L_1[0,1]$ has the separable finite embedding property.
\end{theorem}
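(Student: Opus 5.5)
Suppose $(X,d)$ is a separable metric space and every finite subset of $X$ embeds isometrically in $L_1[0,1]$. The plan is to reduce to an embedding into some $L_1(\Omega,\calA,\mu)$, and then to shrink that space to a separable one, which can be identified with a subspace of $L_1[0,1]$. By Theorem~\ref{BDK-Thm} (with $p=1$), $X$ is $L_1$-embeddable. So there is a measure space $(\Omega,\calA,\mu)$ and an isometry $\gamma: X \to L_1(\Omega,\calA,\mu)$.

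\textbf{Separable reduction.} Let $D \subseteq X$ be a countable dense set. Then $\gamma(X)$ lies in the closure of the linear span of $\gamma(D)$, and translating we may assume one point maps to $0$.
\begin{enumerate}
\item Let $Y$ be the closed linear span of $\gamma(D)$ in $L_1(\mu)$; it is a separable subspace.
\item Each $\gamma(x)$, $x\in D$, is an equivalence class of integrable functions. Choose a representative $f_x$ for each.
\item Each $f_x$ is supported on a $\sigma$-finite set. Let $\Omega_0$ be the countable union of these supports, which is $\sigma$-finite.
\item Let $\calA_0$ be the $\sigma$-algebra on $\Omega_0$ generated by the countably many functions $f_x$. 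It is countably generated.
\end{enumerate}
Then $Y$ sits isometrically inside $L_1(\Omega_0,\calA_0,\mu|_{\calA_0})$, since every $f_x$ is $\calA_0$-measurable and the norms are computed by the same integrals.

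\textbf{Identification with a subspace of $L_1[0,1]$.} Replace $\mu$ on $\Omega_0$ by an equivalent probability measure $\nu = w\mu$ with $w>0$. The map $f\mapsto f/w$ is then an isometry from $L_1(\mu)$ onto $L_1(\nu)$. The measure algebra of $(\Omega_0,\calA_0,\nu)$ is a separable probability measure algebra. By Carathéodory's classification, it decomposes into a nonatomic part, isomorphic to the Lebesgue measure algebra of an interval, plus countably many atoms. Hence $L_1(\Omega_0,\calA_0,\nu)$ is isometric to $L_1$ of an interval $[0,a]$ together with countably many atoms. That space embeds isometrically in $L_1[0,1]$:
\begin{enumerate}
\item rescale $[0,a]$ onto a subinterval, multiplying functions by the appropriate constant;
\item send each atom of mass $m_k$ to the normalized indicator of a disjoint interval of length $\ell_k$, scaled by $m_k/\ell_k$.
\end{enumerate}
Disjointness of supports preserves the $L_1$ norm.

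Composing these maps gives an isometric embedding of $X$ into $L_1[0,1]$.

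The main obstacle is the measure-theoretic step: extracting a countably generated $\sigma$-finite sub-$\sigma$-algebra that sees all of $\gamma(X)$, and invoking the structure theorem for separable measure algebras (Carathéodory/Maharam). The obstacle is also making sure that the isomorphism of measure algebras induces an isometry of the $L_1$ spaces, which is standard once the algebra is shown to be separable.
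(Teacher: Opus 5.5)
Your proof is correct and follows the same route as the paper: apply Theorem~\ref{BDK-Thm} to embed $X$ in some $L_1(\Omega,\calA,\mu)$, observe that the image lies in a separable subspace, and then use the fact that every separable subspace of an $L_1$-space is linearly isometric to a subspace of $L_1[0,1]$. The only difference is that the paper simply cites this last fact, whereas you prove it by the standard argument: reduce to a $\sigma$-finite, countably generated sub-$\sigma$-algebra, pass to an equivalent probability measure, and apply Carath\'eodory's classification of separable measure algebras.
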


\begin{proof}
Suppose that $(X,d)$ is a separable metric space and every finite subset of $(X,d)$ embeds isometrically in $L_1[0,1]$. Then by Theorem~\ref{BDK-Thm} there is a measure space $(\Omega,\calA,\mu)$ such that $(X,d)$ is isometric to a separable subset $Y$ of $L_1(\Omega,\calA,\mu)$, and hence $Y$ lies is a separable subspace of that space. But every separable subspace of an $L_1$-space is linearly isometric to a subspace of $L_1[0,1]$. 
\end{proof}

In Section~\ref{Embed_S1} we shall see that one can find countable metric spaces which
are finitely $\ell_1$-embeddable, but not $\ellinf{1}$-embeddable, and so $\ell_1$ does not have the separable finite embedding property.

\section{Embedding subsets of the circle}\label{Embed_S1}

In terms of isometric embeddings, the spherical distance spaces $S^m$ form an interesting test case, so before progressing further we fix some notation. We shall let $S^m$ denote the unit sphere in $\mR^{m+1}$. Associated to $S^m$ is a geodesic, or great circle metric, which we denote $\delta$. 

The following result, Theorem~6.4.5 of \cite{DL}, tells us about the embedding properties of \textit{finite} subsets of these metric spaces (a restriction that is omitted from the statement of the theorem in \cite{DL}; this has propagated to some later works such as \cite[p.~140]{LP}).

\begin{theorem}\label{Sm_finitely_embeds}
For all $m \geq 1$, the spherical distance space $S^m$ is finitely $\ell_1$-embeddable. 
\end{theorem}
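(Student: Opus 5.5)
The plan is to show that $(S^m,\delta)$ is $L_1$-embeddable, and then pass to finite subsets using the implication ``$L_1$-embeddable $\implies$ finitely $\ell_1$-embeddable'' from Theorem~\ref{Properties-Scale}, which rests on Theorem~\ref{OsL1}. The natural $L_1$ embedding comes from hemispheres, that is, from Crofton's formula.

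Let $\sigma$ be the normalized rotation-invariant probability measure on $S^m$. For $x \in S^m$, define $f_x \in L_1(S^m,\sigma)$ by
\[ f_x(u) = \pi\,\one_{\{\ip<x,u> > 0\}}(u). \]
For $x,y \in S^m$, the difference $|f_x(u) - f_y(u)|$ equals $\pi$ exactly when the hyperplane $u^\perp$ separates $x$ from $y$, and is $0$ otherwise, apart from a $\sigma$-null set. So the key identity to establish is
\[ \sigma\{u \st \operatorname{sign}\ip<x,u> \ne \operatorname{sign}\ip<y,u>\} = \frac{\delta(x,y)}{\pi}, \]
and this gives $\norm{f_x - f_y}_1 = \delta(x,y)$.

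To prove the identity, I would project onto the two-dimensional plane spanned by $x$ and $y$. If $x = \pm y$ the identity is immediate (measure $0$ or $1$). Otherwise, write $u = v + w$ with $v$ in that plane. Whether $u$ separates $x$ and $y$ depends only on the direction of $v$, which is defined for $\sigma$-a.e.\ $u$. By rotation invariance, the direction of $v$ is uniformly distributed on the unit circle of the plane. There, the set of directions $v$ whose orthogonal line separates $x$ and $y$ consists of two arcs, each of length equal to the angle $\delta(x,y)$. Their total proportion of the circle is therefore $2\delta(x,y)/(2\pi) = \delta(x,y)/\pi$.

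The main obstacle I expect is justifying the uniformity of the projected direction. I would handle this by observing that the rotations of $\mR^{m+1}$ fixing the complement of the plane act transitively on the plane's unit circle and preserve $\sigma$, so the pushforward distribution of the direction is rotation invariant on $S^1$ and hence uniform.

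With $x \mapsto f_x$ an isometry of $(S^m,\delta)$ into $L_1(S^m,\sigma)$, Theorem~\ref{OsL1} shows that every finite subset embeds isometrically into some $\ell_1^N$. This gives finite $\ell_1$-embeddability. For a more explicit alternative, one could note directly that restricting to a finite set $Y$ leaves only finitely many separation patterns of $Y$ by hyperplanes. The map $f$ then factors through a finite measure space, which is $\ell_1^N$.
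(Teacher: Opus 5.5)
Your proof is correct and takes the same route the paper indicates. The paper only cites Deza--Laurent for this statement, but it remarks (in Section~\ref{Embed_S1} for $S^1$, and again in the discussion of $S^2$) that hemisphere indicators embed the sphere isometrically into $L_1$, and it passes to finite subsets of $\ell_1$ via Theorem~\ref{OsL1} through Theorem~\ref{Properties-Scale}. Your random-hyperplane computation, with the rotation-invariance argument for uniformity of the projected direction, supplies exactly the missing details.
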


The example we shall be most interested in here is the unit circle $S^1$, in which case $\delta$ is the arc-length metric around the circle. In what follows we shall generally identify points on $S^1$ with their argument in $[0,2\pi)$.
The space $(S^1,\delta)$ avoids the standard obstructions for being isometrically embeddable in $\ell_1$. For example it is easy to embed the space into $L_1(S^1)$. One just maps $x \in S^1$ to $\frac{1}{2} \chi_{I_x}$, where $I_x$ is the set  $ \{y \in S^1 \st \delta(x,y) \le \pi/2\}$. This implies that $(S^1,\delta)$ is hypermetric, and hence also of negative type. 
More generally, $(S^1,\delta)$ embeds isometrically into any given nonzero
$L_{1}$-space that is not purely atomic, simply because such a space must contain an isometric copy of $L_1(S^1)$.
We note that it is relatively easy to give an explicit embedding of any finite subset of  $(S^1,\delta)$ into $\ell_1$ (without recourse to the slightly more involved theory of cut metrics). Indeed, if $X$ is an $n$-point subset of $S^1$, then (in the terminology of \cite{DW2026}) one can isometrically embed it as the vertices of a weighted Hamming cube $\prod_{k=1}^n \{0,w_k\} \subseteq \ell_1^n$. This embedding will appear as a special case of Theorem~\ref{halfcircle_plus} below.
And of course one can easily give a bi-Lipschitz embedding of $(S^1,\delta)$ into $\ell_1^2$. 

As we shall shortly see, however, $(S^{1}, \delta)$ is not $\ell_1^\infty$-embeddable. That raised the question of determining just which metric subspaces $(X, \delta)$ of $(S^{1}, \delta)$ do embed isometrically
in $\ell_{1}$, and that is focus of the rest of this section.

Beyond the fact that every finite subset of $S^1$ embeds isometrically in $\ell_{1}$, we record three trivial observations:
\begin{enumerate}
 \item A subset $X$ of $S^1$ embeds isometrically in $\ell_{1}$ if and only if its closure does. 
 \item Any subset of a half circle embeds isometrically in $\ell_1^1$, and hence in $\ell_1$.
 \item Up to a scaling factor, $(S^1,\delta)$ is isometric to any parameterized loop, so with suitable adjustments, all the embedding results below also apply to such loops.
\end{enumerate}
In what follows then we shall largely restrict our attention to closed subsets of $S^1$.

Let $m$ denote the one-dimensional Lebesgue measure on $S^{1}$. If $t \in [0,2\pi)$ we will denote its antipodal point by ${t^{\ast}}$.
Given a set $Y \subseteq S^{1}$,  we let $Y^{\ast} = \{{t^{\ast}} \st t \in Y\}$ be its antipodal set. 

\begin{theorem}\label{halfcircle_plus}
Suppose that $Y \subseteq S^1$ is a set of measure zero and that $X$ is a closed set with $Y \subseteq X \subseteq Y \cup [\pi,2\pi]$. Then $(X,\delta)$ can be embedded isometrically in $\ell_1$.
\end{theorem}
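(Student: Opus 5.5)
The plan is to discretize the standard $L_1(S^1)$ embedding described above, $x\mapsto \tfrac12\chi_{I_x}$, by grouping the half-circle cuts into countably many families, each of which behaves like a line metric. Equivalently, for $x,y\in S^1$,
\[
\delta(x,y)=\tfrac12\, m\bigl(\{\theta\in[0,2\pi) \st \text{the half-circle } H_\theta=[\theta,\theta+\pi) \text{ separates } x \text{ and } y\}\bigr).
\]
Since $H_{\theta+\pi}$ is the complement of $H_\theta$, it suffices to let $\theta$ range over $[0,\pi)$ and drop the factor $\tfrac12$. Thus $\delta(x,y)=\int_0^\pi |\chi_{H_\theta}(x)-\chi_{H_\theta}(y)|\,d\theta$.

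The cut $H_\theta\cap X$ changes only when one of the endpoints $\theta$ or $\theta+\pi$ passes through a point of $X$. Since $X\cap[0,\pi)\subseteq Y$, the endpoint $\theta\in[0,\pi)$ can only meet points of $Y$. Let $K$ be the closure of $Y\cap[0,\pi)$, together with the endpoints $0,\pi$. I expect $K$ to be a closed null set, but there is a catch: the closure of a null set need not be null.

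Here the closedness of $X$ should help, because $K\subseteq X\cap[0,\pi]\subseteq Y\cup\{\pi\}$, which is null. So $[0,\pi)\setminus K$ is a countable union of disjoint open intervals $J_k$ carrying full measure. Hence
\[
\delta(x,y)=\sum_k\int_{J_k}|\chi_{H_\theta}(x)-\chi_{H_\theta}(y)|\,d\theta .
\]

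The key step is to show that on each $J_k$ the family of cuts restricted to $X$ is nested. For $\theta\in J_k$, no point of $X\cap[0,\pi)$ lies between the left end of $J_k$ and $\theta$. So as $\theta$ increases through $J_k$, the only effect on $X$ is that the other endpoint $\theta+\pi$ sweeps monotonically through the arc $J_k+\pi\subseteq[\pi,2\pi)$. Concretely:
\begin{itemize}
\item points of $X$ lying in $J_k+\pi$ move from outside $H_\theta$ to inside it as $\theta$ passes them;
\item all other points of $X$ have constant membership in $H_\theta$ over $J_k$.
\end{itemize}
One should double-check the endpoint/boundary conventions here, including what happens to points of $Y$ exactly at the ends of $J_k$.

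Granting this, for each $x\in X$ the set $A_k(x)=\{\theta\in J_k \st x\in H_\theta\}$ is an initial or final segment of $J_k$, and all the sets $A_k(x)$ are mutually comparable under inclusion. I would normalise so that they are all final segments, possibly empty or all of $J_k$. Then for any $x,y\in X$,
\[
\int_{J_k}|\chi_{A_k(x)}-\chi_{A_k(y)}|\,d\theta=|g_k(x)-g_k(y)|, \qquad g_k(x)=m(A_k(x)).
\]

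Finally, fix a base point $x_0\in X$ and define $F(x)=(g_k(x)-g_k(x_0))_k$. Each coordinate satisfies $|g_k(x)-g_k(x_0)|\le m(J_k)$ and $\sum_k m(J_k)\le\pi$, so $F(x)\in\ell_1$. Summing the identity over $k$ gives $\|F(x)-F(y)\|_1=\delta(x,y)$.

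The main obstacle is the nestedness claim: one must verify carefully that within a single $J_k$ no point of $X$ is crossed by the endpoint $\theta$. This is where both the hypothesis $X\subseteq Y\cup[\pi,2\pi]$ and the closedness of $X$ are used. For finite $X$ the construction reduces to the weighted Hamming cube mentioned before the theorem.
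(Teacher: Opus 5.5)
Your proof is correct. It produces exactly the paper's embedding, but it verifies the isometry by a genuinely different mechanism.

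\textbf{Same map.} Your closed null set $K=\overline{X\cap[0,\pi)}\cup\{0,\pi\}$ is precisely the set $Y$ the paper reaches after its reductions. (The paper enlarges $X$ to contain $[\pi,2\pi]$ and replaces $Y$ by $\{0,\pi\}\cup(Y\cap(0,\pi))$.) Your $J_k$ are its gaps $U_j=(a_j,b_j)$. With $x_0=0$, your coordinates coincide with the paper's $\gamma$:
\begin{itemize}
\item for $x\in X\cap[0,\pi)$, $g_k(x)=0$ or $m_k$ according as $x\le a_k$ or $x\ge b_k$;
\item for $x\in[\pi,2\pi)$ with $x-\pi\in J_k$, $g_k(x)=b_k-(x-\pi)=m_k-(x-a_k^{\ast})$.
\end{itemize}

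\textbf{Different verification.} The paper writes down these formulas and checks $\|\gamma(s)-\gamma(t)\|_1=\delta(s,t)$ by cases, according to whether $s,t$ lie in $Y$, $Y^{\ast}$ or $U^{\ast}$, with several sub-cases omitted. You instead derive the coordinates from the half-circle cut representation $\delta(x,y)=\int_0^\pi|\chi_{H_\theta}(x)-\chi_{H_\theta}(y)|\,d\theta$. The isometry then follows from countable additivity once $m(K)=0$ and the cuts indexed by each $J_k$ are nested.

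\textbf{What each route buys.} Yours gives a uniform, case-free verification and needs no reduction to the maximal case. It also pins down where the hypotheses enter: closedness of $X$ and $m(Y)=0$ serve only to make the discarded parameter set $K$ null. And it shows the construction as a discretisation of the $L_1(S^1)$ embedding. The paper's route is more hands-on. The monotone, two-valued behaviour of the coordinates on $Y\cup Y^{\ast}$ that it makes explicit is the same structure exploited in the converse, Theorem~\ref{antipodes}.

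\textbf{Minor points.}
\begin{itemize}
\item The step you flag as the main obstacle is immediate. For $\theta\in J_k=(a_k,b_k)$ and $x\in X\cap[0,\pi)$, $x\in H_\theta$ if and only if $\theta\le x$, and $x\in K$ forces $x\le a_k$ or $x\ge b_k$. For $x\in X\cap[\pi,2\pi)$, $A_k(x)=\{\theta\in J_k:\theta>x-\pi\}$. So every $A_k(x)$ is already a final segment, and no normalisation is needed.
\item The inclusion $K\subseteq X\cap[0,\pi]$ fails when $0\notin X$. What you need, and what holds, is $K\subseteq Y\cup\{0,\pi\}$.
\item Your closing Hamming-cube remark is only true after replacing $X$ by $X\cup X^{\ast}$. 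For $X=\{0,5\pi/4,7\pi/4\}$ your construction gives the three collinear points $0,3\pi/4,\pi/4$ in $\ell_1^1$, which are not the vertices of a weighted Hamming cube.
\end{itemize}
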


\begin{proof}
We begin with a few simplifications. It clearly suffices to consider the maximal case where $[\pi,2\pi] \subseteq X$, so we shall assume this. Note that this ensures that $0,\pi \in X$. The hypotheses then would still hold if $Y$ were replaced by $Y' = \{0,\pi\} \cup (Y \cap (0,\pi))$. Consequently we may further assume that $\{0,\pi\} \subseteq Y \subseteq [0,\pi]$. Since $X$ is closed, under this assumption we have $\overline{Y} \subseteq X \cap [0,\pi] \subseteq Y$ and hence $Y$ must be closed.
It follows then that $U = [0, \pi] \setminus Y$ can be expressed as a countable
  disjoint union of open intervals $U = \bigcup_{j \in J} U_{j}$, where
  $J = \{ 1, 2, \ldots, j_{0}\}$ for some $j_{0} \in \mathbb{Z}^{+}$ or $J = \mathbb{Z}^{+}$.

  Letting $U_{j} = (a_j,b_j)$ and $m_j  =b_j - a_j$, so that  $m(U) = \sum_j m_j = \pi$, we set
  $\vecu = \sum\limits_{j \in J} m_j \vece_j$. For $t \in [0,\pi]$, we set $I_t = \{j \st b_j \le t\}$, and for
  $t \in [\pi,2\pi)$, we set $\hat{I}_t = \{j \st b_j \le t^{\ast} \}$. These sets satisfy
  the following easily verified properties: $I_{0} = \emptyset = \hat{I}_{\pi}$, $\hat{I}_{t^{\ast}} = I_{t}$ if $t \in (0, \pi]$,
  and $I_{t^{\ast}} = \hat{I}_{t}$ if $t \in [\pi, 2\pi)$.

  Noting that $X$ can be expressed as an essentially disjoint union $Y \cup Y^{\ast} \cup \bigl( \cup_{j} \,U^{\ast}_j \bigr)$,
  wherein the only overlap of the underlying sets is the intersection $Y \cap Y^{\ast} = \{ 0, \pi \}$,
  we define a map $\gamma: X \to \ell_1$ as follows:
  \[
  \gamma(t)=
  \begin{cases}
    \displaystyle \sum_{j\in I_t} m_j \vece_j,
    & t\in Y,\\[1ex]

    \displaystyle \vecu-\sum_{j\in \hat{I}_t} m_j \vece_j,
    & t\in Y^{\ast},\\[1ex]

    \displaystyle \vecu-(t- a^{\ast}_\ell)\vece_\ell
    -\sum_{j\in \hat{I}_t} m_j \vece_j,
    & t\in U^{\ast}_\ell \text{ for some } \ell,
  \end{cases}
  \]
  where sums indexed over the empty set are taken to be the zero vector $\boldsymbol{0} \in \ell_{1}$.
  In particular, we have $\gamma(0) = \boldsymbol{0}$ and $\gamma(\pi) = \vecu$.

  The above mentioned properties of the sets $I_{t}$ and $\hat{I}_{t}$ ensure that
  $\gamma(t^{\ast}) = \vecu - \gamma(t)$ for all $t \in Y \cup Y^{\ast}$. As a result,
  we see that
  \begin{align*}
    \norm{  \gamma(t) - \gamma(t^{\ast}) }_{1}
    & =  \norm{ \gamma(t) - (\vecu - \gamma(t)) }_{1} \\
    & =  \Bigl\| \sum_{j \in I_t} m_j \vece_j - \sum_{j \in J \setminus I_t} m_j \vece_j \Bigr\|_{1} \\
    & =  \sum_{j \in I_t} m_j + \sum_{j \in J \setminus I_t} m_j \\
    & =  \sum_{j \in J} m_j \\
    & =  \pi = \delta(t,t^{\ast}),
  \end{align*}
  for all $t \in Y$. More generally then, we need to verify that $\delta(t, s) = \norm{  \gamma(t) - \gamma(s) }_{1}$ for all $s, t \in X$. This degenerates
  into the consideration of cases.

  Case 1: $0 \leq s < t \leq \pi$ where $s, t \in Y$

  Before continuing, it is helpful to define $I_{s, t} = I_{t} \setminus I_{s}$. It is then the case that
  $\gamma(t) - \gamma(s) = \sum_{j \in I_{s, t}} m_j \vece_j$, and so
  $\| \gamma(t) - \gamma(s) \|_{1} = \sum_{j \in I_{s, t}} m_j$. On the other hand
  $\delta(t, s) = t - s = m \left((s, t)\right)$. However, no $U_{j}$ can overlap $s$ or $t$, so
  \[
  (s, t) = \left( \cup_{j \in I_{s, t}} U_{j} \right) \cup \left( Y \cap (s, t) \right),
  \]
  and the latter set $Y \cap (s, t)$ has measure zero. Hence
  \[
  \delta(t, s) = m((s, t)) = \sum_{j \in I_{s, t}} m_j = \| \gamma(t) - \gamma(s) \|_{1},
  \]
  as asserted. This completes Case 1.

  Case 2: $\pi \leq s < t < 2 \pi$

  In this case there are four sub-cases, depending upon where $s$ and $t$ are located, relative to the sets
  $Y^{\ast}$ and $U^{\ast}$.

  (a) $s, t \in Y^{\ast}$
  
  (b) $s \in Y^{\ast}$, $t \in U^{\ast}$
  
  (c) $s \in U^{\ast}$, $t \in Y^{\ast}$
  
  (d) $s, t \in U^{\ast}$

  In order to verify Case 2 (a), suppose that $\pi \leq s < t < 2 \pi$ with $s, t \in Y^{\ast}$.
  Since $\gamma(t) = \vecu - \gamma(t^{\ast})$ and $\gamma(s) = \vecu - \gamma(s^{\ast})$, we may
  simply apply Case 1 to obtain the desired conclusion:
  \[ 
    \norm{  \gamma(t) - \gamma(s) }_{1}
      =   \| \gamma(s^{\ast}) - \gamma(t^{\ast}) \|_{1} 
      =   \delta(s^{\ast}, t^{\ast})  
      =   \delta(t, s).
  \] 

  In order to verify Case 2 (b), suppose that $\pi \leq s < t < 2 \pi$ with $s \in Y^{\ast}$ and $t \in U^{\ast}$.
  In this sub-case, $t \in U^{\ast}_\ell = (a^{\ast}_{\ell}, b^{\ast}_{\ell})$ for some $\ell \in J$. For notational
  simplicity, let $\alpha = a^{\ast}_{\ell}$ and $\beta = b^{\ast}_{\ell}$.
  As $t^{\ast} \in U_{\ell} = (\alpha^{\ast}, \beta^{\ast}) = (a_{\ell}, b_{\ell})$, it follows that
  $\hat{I}_{t} = \hat{I}_{\alpha}$. Keeping this in mind, we see that
  \begin{align*}
    \gamma(t) - \gamma(s)
    & =  \Bigl( \vecu - (t - \alpha)\vece_{\ell} - \sum_{j \in \hat{I}_{t}} m_{j} \vece_{j}\Bigr)
    - \left( \vecu - \gamma(s^{\ast} )\right) \\
    & =  - (t - \alpha)\vece_{\ell} - \sum_{j \in \hat{I}_{t}} m_{j} \vece_{j} + \sum_{j \in I_{s^{\ast}}} m_{j} \vece_{j} \\
    & =  - (t - \alpha)\vece_{\ell} - \sum_{j \in \hat{I}_{t}} m_{j} \vece_{j}	+ \sum_{j \in \hat{I}_{s}} m_{j} \vece_{j} \\
    & =  - (t - \alpha)\vece_{\ell} - \Bigl( \sum_{j \in \hat{I}_{\alpha}} m_{j} \vece_{j}
    - \sum_{j \in \hat{I}_{s}} m_{j} \vece_{j} \Bigr) \\
    & =  - (t - \alpha)\vece_{\ell} - \sum_{j \in I_{s^{\ast}, \alpha^{\ast}}} m_{j} \vece_{j},
  \end{align*}
  where the last line follows trivially because $\hat{I}_{\alpha} = I_{\alpha^{\ast}}$ and $\hat{I}_{s} = I_{s^{\ast}}$.
  Consequently,
  \[
  \norm{  \gamma(t) - \gamma(s) }_{1} 
    = (t - \alpha) 
        + \sum_{j \in I_{s^{\ast}, \alpha^{\ast}}} m_{j}.
  \]
  Furthermore, by Case 2 (a) and Case 1,
  \[ 
    \alpha - s
      =  \norm{  \gamma(\alpha) - \gamma(s) }_{1}  
      =  \norm{\gamma(s^{\ast}) - \gamma(\alpha^{\ast}) }_{1}  
      =  \sum_{j \in I_{s^{\ast}, \alpha^{\ast}}} m_{j}.
  \] 
  So, in fact, $\| \gamma(t) - \gamma(s) \|_{1} = (t - \alpha) + (\alpha - s) = t - s = \delta(t, s)$. This completes Case 2 (b).

  Cases 2 (c) and 2 (d) are similar and are omitted.

  Case 3: $0 \leq s < \pi < t < 2\pi$ where $s \in Y$

  In this case there are four sub-cases, depending upon $s^{\ast}$ and $t$.

  (a) $t \in Y^{\ast}$, $s^{\ast} \leq t$

  (b) $t \in Y^{\ast}$, $s^{\ast} > t$

  (c) $t \in U^{\ast}$, $s^{\ast} \leq t$

  (d) $t \in U^{\ast}$, $s^{\ast} > t$

  In order to verify Case 3 (a), suppose that $0 \leq s < \pi < t < 2\pi$, $s \in Y$, $t \in Y^{\ast}$ and $s^{\ast} \leq t$.
  In this sub-case, $\delta(t, s) = 2\pi - (t - s)$. As $s, t^{\ast} \in Y$, it is worth noting that
  $s = \| \gamma(s) - \gamma(\boldsymbol{0}) \|_{1} = \sum_{j \in I_{s}} m_{j} \vece_{j}$
  and (similarly) $t^{\ast} = \sum_{j \in I_{t^{\ast}}} m_{j} \vece_{j}$ by Case 1. As a result,
  \begin{eqnarray*}
    \gamma(t) - \gamma(s)
    & = & \vecu - \gamma(t^{\ast}) - \gamma(s) \\
    & = & \vecu - \sum_{j \in I_{t^{\ast}}} m_{j} \vece_{j} - \sum_{j \in I_{s}} m_{j} \vece_{j} \\
    & = & \sum_{j \notin I_{t^{\ast}}} m_{j} \vece_{j} - \sum_{j \in I_{s}} m_{j} \vece_{j}.
  \end{eqnarray*}
  However, $I_{s} \subseteq I_{t^{\ast}}$ because $s^{\ast} \leq t$. Hence, by the previous calculation,
  \begin{eqnarray*}
    \| \gamma(t) - \gamma(s) \|_{1}
    & = & \sum_{j \notin I_{t^{\ast}}} m_{j} + \sum_{j \in I_{s}} m_{j} \vece_{j} \\
    & = & \Big( \pi - \sum_{j \in I_{t^{\ast}}} m_{j} \Big) + s \\
    & = & (\pi - t^{\ast}) + s \\
    & = & (\pi - (t - \pi)) + s \\
    & = & 2\pi - (t - s) \\
    & = & \delta(t, s).
  \end{eqnarray*}
  This completes Case 3 (a).

  In order to verify Case 3 (b), suppose that $0 \leq s < \pi < t < 2\pi$, $s \in Y$, $t \in Y^{\ast}$ and $s^{\ast} > t$.
  In this sub-case we have $0 \leq t^{\ast} < \pi < s^{\ast} < 2\pi$ with $t^{\ast} \in Y$, $s^{\ast} \in Y^{\ast}$ and
  $t = (t^{\ast})^{\ast} < s^{\ast}$. So we can apply Case 3 (a) with $s$ replaced by $t^{\ast}$ and $t$ replaced by $s^{\ast}$. Hence,
  \begin{eqnarray*}
    \delta(t, s)
    & = & \delta(s^{\ast}, t^{\ast}) \\
    & = & \| \gamma(s^{\ast}) - \gamma(t^{\ast}) \|_{1} \\
    & = & \| \gamma(t) - \gamma(s) \|_{1},
  \end{eqnarray*}
  as one would hope. This completes Case 3 (b).

  Cases 3 (c) and 3 (d) are similar and are omitted.
\end{proof}

\begin{theorem}\label{antipodes}
  Let $X = Y \cup Y^{\ast} \subseteq S^{1}$ where $Y$ is a nonempty closed subset of $[0, \pi]$.
  Then $(X, \delta)$ can be isometrically embedded in $\ell_{1}$ if and only if $m(Y) = 0$.
\end{theorem}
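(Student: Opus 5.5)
The ``if'' direction should follow from Theorem~\ref{halfcircle_plus}. If $m(Y)=0$, then $Y^{\ast}\subseteq[\pi,2\pi]$, so $X=Y\cup Y^{\ast}$ is a closed set with $Y\subseteq X\subseteq Y\cup[\pi,2\pi]$. That theorem then gives the embedding directly. All the work is in the converse, which I would argue by contradiction: assume $m(Y)>0$ and that $\gamma:X\to\ell_1$ is an isometry.

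\emph{Step 1 (coordinatewise betweenness).} For $s,t\in Y$ with $s<t$ we have $\delta(t,s)+\delta(s,t^{\ast})=(t-s)+(\pi-(t-s))=\pi=\delta(t,t^{\ast})$. The same kind of identity holds for every triple formed from a point, its antipode, and any third point of $X$. In $\ell_1$, equality $\norm{a-c}_1=\norm{a-b}_1+\norm{b-c}_1$ forces coordinatewise betweenness: $b_k$ lies between $a_k$ and $c_k$ for every $k$. Writing $\gamma=(\gamma_k)_k$, it follows that each $\gamma_k(s)$ lies between $\gamma_k(t)$ and $\gamma_k(t^{\ast})$ for all $s,t\in X$. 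Similarly, for $s<t<u$ in $Y$ the point $t$ is metrically between $s$ and $u$, so each $\gamma_k$ is monotone on $Y$ (and on $Y^{\ast}$). Replacing $\gamma_k$ by $-\gamma_k$ where necessary, which is an isometry of $\ell_1$, we may assume every $\gamma_k$ is nondecreasing on $Y$.

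\emph{Step 2 (each coordinate is a single cut).} This is the step I expect to be the main obstacle. I would show that, for each $k$, the quantity $\gamma_k(t)-\gamma_k(t^{\ast})$ takes at most two values $\pm c_k$ on $Y$. Equivalently, along $Y$ (traversed from $\min Y$ to $\max Y$) the $k$-th coordinate changes only at one ``flip'' point, and it does so by a single jump, which can occur across a gap of $Y$ or at a point of $Y$.

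The idea is to combine monotonicity on $Y$ with the betweenness relation above. For $s<t$ we get both $\gamma_k(s)\le\gamma_k(t)$ and $\gamma_k(s)$ lying between $\gamma_k(t)$ and $\gamma_k(t^{\ast})$, which pushes $\gamma_k(t^{\ast})$ below all earlier values. Symmetric arguments on $Y^{\ast}$ should then pin down the configuration. I would first work this out carefully for a four-point configuration $s<t$, $s^{\ast},t^{\ast}$, where $X$ restricted to these points is a ``square'', and then propagate the conclusion.

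\emph{Step 3 (measure contradiction).} Once Step 2 is in hand, for $s<t$ in $Y$ we get
\[ t-s=\norm{\gamma(t)-\gamma(s)}_1=\sum_{k}\bigl(\gamma_k(t)-\gamma_k(s)\bigr). \]
In this sum, only coordinates whose flip falls in $(s,t]$ contribute. Define a measure $\nu$ on $[0,\pi]$ by $\nu((s,t])=t-s$ for $s,t\in Y$; this determines $\nu$ on $Y$, and there $\nu|_Y=m|_Y$.

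On the other hand, the displayed formula expresses $\nu$ as a countable sum of contributions, one for each $k$. Any coordinate whose flip happens across a gap of $Y$ contributes nothing to $\nu(Y)$. A coordinate whose flip occurs at a point of $Y$ contributes an atom. Hence $\nu|_Y$ is purely atomic, supported on countably many points. This contradicts $\nu|_Y=m|_Y$ being nonatomic with $m(Y)>0$. Therefore $m(Y)=0$.
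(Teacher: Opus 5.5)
Your ``if'' direction and Step 1 are correct, and your outline (monotone, two-valued coordinates, then locate where each coordinate changes) is the same as the paper's. You leave Step 2 as a plan, but it follows at once from Step 1. Since $X=Y\cup Y^{\ast}$ is closed under $t\mapsto t^{\ast}$, for every $t\in X$ the interval spanned by $\gamma_k(t)$ and $\gamma_k(t^{\ast})$ contains all of $\gamma_k(X)$, so it is the convex hull of $\gamma_k(X)$. Hence $\gamma_k(t)$ is one of the two endpoints of that hull for every $t\in X$. The paper reaches the same point via $\gamma(0)=\zero$, $0\le x_i\le u_i$ and $\sum_i|x_i(s^{\ast})-x_i(s)|=\pi=\sum_i u_i$.

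The genuine gap is in Step 3. You never use that each coordinate is $1$-Lipschitz, i.e. $|\gamma_k(t)-\gamma_k(s)|\le\norm{\gamma(t)-\gamma(s)}_1=\delta(t,s)$. So you admit flips ``at a point of $Y$'' and try to neutralize them with the claim $\nu|_Y=m|_Y$. That claim is asserted, not proved, and the reason you give is false: the values $\nu((s,t])$ for $s,t\in Y$ do not determine $\nu|_Y$. The mass of a gap $(a,b)$ can be moved onto the endpoint $b\in Y$ without changing any of them. For $Y=[0,1]\cup[2,3]$, both $m$ and $m|_Y$ plus a unit point mass at $2$ satisfy $\nu((s,t])=t-s$ for all $s<t$ in $Y$, yet their restrictions to $Y$ differ. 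Your representation of $\nu$ as a sum of atoms also fails. Suppose a flip occurs at $p=\max A$ and the upper level set accumulates at $p$ from the right. Then $\gamma_k(t)-\gamma_k(s)=c_k$ exactly when $s\le p<t$, and no point mass $\mu$ gives this as $\mu((s,t])$.

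The missing idea is continuity. A continuous, monotone, two-valued function on the compact set $Y$ has closed level sets, so it can change value only across a gap $(a_j,b_j)$ of $Y$; there are no point flips at all. After that no measure theory is needed. The coordinates that differ between $\gamma(a_j)$ and $\gamma(b_j)$ are exactly those flipping across $(a_j,b_j)$, so their $c_k$ sum to $\norm{\gamma(b_j)-\gamma(a_j)}_1=b_j-a_j$. Summing over the gaps gives $\max Y-\min Y=\norm{\gamma(\max Y)-\gamma(\min Y)}_1=\sum_j(b_j-a_j)=\max Y-\min Y-m(Y)$, hence $m(Y)=0$. This is the paper's count $\pi=\sum_i u_i\le\sum_j\norm{\gamma(b_j)-\gamma(a_j)}_1=m(U)\le\pi$ in slightly different form.
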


\begin{proof} ($\Rightarrow$)
  The rotational invariance of $S^{1}$ allows us to assume that $0 \in Y$ and (hence) $\pi \in Y^{\ast}$.
  We may further assume that $0 \in Y^{\ast}$ (so that $Y \cap Y^{\ast} = \{ 0, \pi\}$).
  As $Y^{\ast}$ is a closed subset of $[\pi, 2\pi]$, $U = [\pi, 2\pi] \setminus Y^{\ast}$ can be expressed
  as a countable union of pairwise disjoint open intervals $U_{j} = (a_{j}, b_{j})$, $j \in J$, where
  $J = \{ 1, 2, \ldots, j_{0}\}$ for some $j_{0} \in \mathbb{Z}^{+}$ or $J = \mathbb{Z}^{+}$. By
  setting $m_{j} = b_{j} - a_{j}$ for each $j \in J$, it is then immediate that $m(U) = \sum\limits_{j \in J} m_{j}$.

  Suppose that $\gamma : X \rightarrow \ell_{1}$, $\gamma(t) = (x_{1}(t), x_{2}(t), \ldots)$, is an
  isometric embedding of $(X, \delta)$ in $\ell_{1}$. As $\gamma$ is an isometry, each component function
  $x_{i}(t)$ is non-expansive and hence continuous. Without loss of generality, we may assume that
  $\gamma(0) = (0, 0, 0, \ldots)$. Setting $\gamma(\pi) = \boldsymbol{u} = (u_{1}, u_{2}, u_{3}, \ldots) \in \ell_{1}$,
  we may further assume that $u_{i} \geq 0$ for all $i$ and that $u_{1} > 0$. So we see that
  $\pi = \| \gamma(\pi) - \gamma(0) \|_{1} = \sum\limits_{i} u_{i}$.

  We shall see presently that the component functions $x_{i}$ are non-negative, non-decreasing on $Y$, and
  non-increasing on $Y^{\ast}$. For instance, suppose that $x_{i}(t) < 0$ for some $i$ and some $t \in Y$. Then,
  \begin{eqnarray*}
    \pi & = & t + (\pi - t) \\
    & = & \| \gamma(t) - \gamma(0) \|_{1} + \| \gamma(\pi) - \gamma(t) \|_{1} \\
    & = & -x_{i}(t) + \sum\limits_{j \not= i} |x_{j}(t)| + (u_{i} - x_{i}(t)) + \sum\limits_{j \not= i} |u_{j} - x_{j}(t)| \\
    & \geq & -2x_{i}(t) + u_{i} + \sum\limits_{j \not= i} u_{j} \\
    & = & -2x_{i}(t) + \pi \\
    & > & \pi,
  \end{eqnarray*}
  which is a contradiction. Thus $x_{i}(t) \geq 0$ for all $i$ and all $t \in Y$.

  Now suppose that $s, t \in Y$ with $s < t$. As $t = s + (t - s)$ and each $x_{i}$ is non-negative on $Y$,
  it follows that
  \begin{eqnarray*}
    \sum\limits_{i} x_{i}(t)
    & = & \sum\limits_{i} x_{i}(s) + \sum\limits_{i} |x_{i}(t) - x_{i}(s)|,
  \end{eqnarray*}
  which is only possible if $x_{i}(s) \leq x_{i}(t)$ for all $i$. Hence each $x_{i}$ is non-decreasing on $Y$.

  Applying the same arguments as one goes from $2\pi = 0$ to $\pi$ in the opposite direction shows that each
  $x_{i}$ is non-negative and non-increasing on $Y^{\ast}$. By drawing these strands together, we see that
  \begin{enumerate}
  \item $0 \le x_i(t) \le u_i$,
  \item $|x_i(t) - x_i(s)| \le u_i$,
  \item $x_i$ is non-decreasing on $Y$, and
  \item $x_i$ is non-increasing on $Y^{\ast}$,
  \end{enumerate}
  for all $s,t \in X$ and all $i$.

  Now let $s \in X \cap [0,\pi)$ be given, and so $s^{\ast} = s + \pi \in X$. It is then the case that
  \begin{align*}
    \pi
    & =   \sum\limits_{i} u_{i} \\
    & =   \norm{  \gamma(s^{\ast}) - \gamma(s) }_{1} \\
    & =   \sum\limits_{i} |x_{i}(s^{\ast}) - x_{i}(s)| \\
    & \leq  \sum\limits_{i} u_{i} \\
    & =   \pi.
  \end{align*}
  Applying (2) with $t = s^{\ast}$ we deduce that $|x_{i}(s^{\ast}) - x_{i}(s)| = u_{i}$ for all $i$.
  Furthermore, as a consequence of (1), we see that either
  \begin{enumerate}
  \item[(5)] $x_{i}(s) = 0$ and $x_{i}(s^{\ast}) = u_{i}$, or
  \item[(6)] $x_{i}(s) = u_{i}$ and $x_{i}(s^{\ast}) = 0$,
  \end{enumerate}
  for each $s \in X$ and each $i$. Notably, these conditions are mutually exclusive whenever $u_{i} > 0$.
  In particular, the range of $x_{i}$ is $\{ 0, u_{i} \}$ for all $i$.

  Let $I = \{ i : u_{i} > 0 \}$ and fix an element $i \in I$. The map defined by
  $h_{i}(t) = x_{i}(t) - x_{i}(t + \pi)$, $t \in Y^{\ast}$,
  is continuous on $Y^{\ast}$. By (5) and (6), $|h_{i}(t)| = u_{i} > 0$ for all $t \in Y^{\ast}$.
  If $t_{1} < t_{2}$ in $Y^{\ast}$, then $t_{1} + \pi < t_{2} + \pi$ in $Y$, so that
  $x_{i}(t_{1}) \geq x_{i}(t_{2})$ and $x_{i}(t_{1} + \pi) \leq x_{i}(t_{2} + \pi)$ by (3) and (4). As a result,
  if $t_{1} < t_{2}$ in	$Y^{\ast}$, then
  \begin{eqnarray*}
    h_{i}(t_{2}) - h_{i}(t_{1})
    & = & (x_{i}(t_{2}) - x_{i}(t_{1})) - (x_{i}(t_{2} + \pi) - x_{i}(t_{1} + \pi)) \\
    & \leq & 0.
  \end{eqnarray*}
  Hence $h_{i}$ is non-increasing on $Y^{\ast}$. Moreover, the range of $h_{i}$ is $\{ -u_{i}, u_{i} \}$ because
  $0, \pi \in Y^{\ast}$.

  As $i \in I$, $-u_{i} \not= u_{i}$, and so by continuity and range considerations, $A_{i} = h_{i}^{-1}(u_{i})$
  and $B_{i} = h_{i}^{-1}(-u_{i})$ are disjoint non-empty closed subsets of $[\pi, 2\pi]$ whose union is all of $Y^{\ast}$.
  In addition, we have that $a < b$ for all $a \in A_{i}$ and $b \in B_{i}$, because $h_{i}$ is non-increasing on $Y^{\ast}$
  with range $\{ -u_{i}, u_{i} \}$. Thus, setting $\alpha_{i} = \max A_{i}$ and $\beta_{i} = \min B_{i}$, it follows that
  $\alpha_{i} < \beta_{i}$. By construction, $(\alpha_{i}, \beta_{i}) \subseteq U = [\pi, 2\pi] \setminus Y^{\ast}$
  and $(\alpha_{i}, \beta_{i})$ is not contained in any larger interval in $U$. This ensures that
  $(\alpha_{i}, \beta_{i}) = U_{j} = (a_{j}, b_{j})$ for some uniquely determined $j \in J$. This defines
  a map $\phi : I \rightarrow J$. Before continuing, it is worth noting that in the current context,
  $h_{i}(a_{j}) = u_{i} = x_{i}(a_{j}) - x_{i}(a_{j} + \pi)$ and $h_{i}(b_{j}) = - u_{i} = x_{i}(b_{j}) - x_{i}(b_{j} + \pi)$.
  On account of (5) and (6), the noted equations force $x_{i}(a_{j}) = u_{i}$ and $x_{i}(b_{j}) = 0$.
  In summary, if $\phi(i) = j$, then it must be the case that $x_{i}(a_{j}) - x_{i}(b_{j}) = u_{i}$. This will be used below.

  It turns out that the map $\phi$ is onto. Indeed, let $j \in J$ be given. According to the various definitions, we
  must show that there exists an index $i_{j} \in I$ such that $\max h_{i_{j}}^{-1}(u_{i_{j}}) = a_{j}$ and
  $\min h_{i_{j}}^{-1}(-u_{i_{j}}) = b_{j}$. Now $a_{j}$ and $b_{j}$ are distinct elements of $Y^{\ast}$, so
  $\gamma(a_{j}) \not= \gamma(b_{j})$. Hence there must exist an index $i_{j}$ such that
  $x_{i_{j}}(a_{j}) \not= x_{i_{j}}(b_{j})$. Moreover, as $x_{i_{j}}$ is non-increasing on $Y^{\ast}$ with range
  $\{ 0, u_{i_{j}} \}$, we must have $x_{i_{j}}(a_{j}) = u_{i_{j}} > 0$ and $x_{i_{j}}(b_{j}) = 0$. Therefore $i_{j} \in I$.
  As $|h_{i_{j}}(t)| = u_{i_{j}} > 0$ for all $t \in Y^{\ast}$, we must have $h_{i_{j}}(a_{j}) = u_{i_{j}}$
  and $h_{i_{j}}(b_{j}) = - u_{i_{j}}$, because $h_{i_{j}}$ is non-increasing on $Y^{\ast}$ with range
  $\{ -u_{i_{j}}, u_{i_{j}}\}$. Since no elements of $Y^{\ast}$ lie between $a_{j}$ and $b_{j}$, we deduce that
  $\max h_{i_{j}}^{-1}(u_{i_{j}}) = a_{j}$ and $\min h_{i_{j}}^{-1}(-u_{i_{j}}) = b_{j}$. In other words,
  $\phi(i_{j}) = j$.

  As $\phi$ is onto,
  \begin{align*}
    \pi
    & =  \sum\limits_{i \in I} u_{i} \\
    & =  \sum\limits_{j \in J} \sum\limits_{i \in \phi^{-1}(j)} u_{i} \\
    & =  \sum\limits_{j \in J}	\sum\limits_{i \in \phi^{-1}(j)} x_{i}(a_{j}) - x_{i}(b_{j}) \\
    & \leq  \sum\limits_{j \in J} 
      \norm{ \gamma(b_{j}) - \gamma(a_{j}) }_{1} \\
    & =  \sum\limits_{j \in J} b_{j} - a_{j} \\
    & =  m(U) \\
    & \leq  \pi.
  \end{align*}
  We must therefore have $m(U) = \pi$ and hence $m(Y^{\ast}) = m(Y) = 0$.

  ($\Leftarrow$) This follows immediately from Theorem~\ref{halfcircle_plus}. 
\end{proof}

\begin{corollary}
  No circular arc in $S^{1}$ of length greater than $\pi$ can be isometrically embedded in $\ell_{1}$.
  More generally, $\pi$ is the maximum Lebesgue measure for which a closed subspace of $S^{1}$ can be
  isometrically embedded in $\ell_{1}$.
\end{corollary}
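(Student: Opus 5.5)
The plan is to reduce to Theorem~\ref{antipodes} via two observations: isometric embeddability in $\ell_1$ passes to subsets, and any closed set of measure greater than $\pi$ must meet its antipodal set in positive measure.

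\emph{Step 1 (arcs).} Let $A$ be a circular arc of length $L>\pi$. After a rotation, assume $A\supseteq[0,\pi+\varepsilon]$ for some $\varepsilon>0$. Put $Y=[0,\varepsilon]$, a nonempty closed subset of $[0,\pi]$ with $m(Y)=\varepsilon>0$. Then $Y^{\ast}=[\pi,\pi+\varepsilon]$, so $Y\cup Y^{\ast}\subseteq A$. If $A$ embedded isometrically in $\ell_1$, restricting the embedding would embed $(Y\cup Y^{\ast},\delta)$, since the metric on subsets is the restriction of $\delta$. This contradicts Theorem~\ref{antipodes}.

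\emph{Step 2 (general closed sets).} Let $X$ be closed with $m(X)>\pi$. Since $t\mapsto t^{\ast}$ preserves Lebesgue measure, $m(X^{\ast})=m(X)>\pi$. Then
\[
m(X\cap X^{\ast})\ge m(X)+m(X^{\ast})-2\pi>0 .
\]
The set $Z=X\cap X^{\ast}$ is closed, satisfies $Z^{\ast}=Z$, and has positive measure. Write $Z=(Z\cap[0,\pi])\cup(Z\cap[\pi,2\pi])$. The second piece is the antipodal image of $(Z\cap[0,\pi])$, up to the endpoints. So with $Y=Z\cap[0,\pi]$, which is closed and nonempty, we get $Z=Y\cup Y^{\ast}$, and $m(Y)=m(Z)/2>0$.

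Theorem~\ref{antipodes} then shows $Z$ does not embed isometrically in $\ell_1$. Since $Z\subseteq X$, neither does $X$. This step is the main point, though it is easy. The only care needed is to confirm that $Y\cup Y^{\ast}$ equals $Z$ exactly, which follows from $Z^{\ast}=Z$ and $Y^{\ast}=Z\cap[\pi,2\pi]$.

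\emph{Step 3 (attainment).} To see that $\pi$ is actually the maximum, note that the closed half circle $[\pi,2\pi]$ has measure $\pi$ and embeds isometrically in $\ell_1^1$, by the trivial observation (2) (or by Theorem~\ref{halfcircle_plus} with $Y=\emptyset$). Combining this with Step 2 gives the corollary.
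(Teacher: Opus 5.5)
Your proof is correct and follows the paper's route. You pass to $Z = X\cap X^{\ast}$, which has positive measure by the inclusion--exclusion bound, apply Theorem~\ref{antipodes}, and get attainment from the closed half circle via Theorem~\ref{halfcircle_plus}. You also make explicit what the paper leaves implicit, namely that $Z = Y\cup Y^{\ast}$ with $Y = Z\cap[0,\pi]$ a nonempty closed set of measure $m(Z)/2>0$, so that Theorem~\ref{antipodes} genuinely applies; your separate treatment of arcs in Step 1 is redundant but harmless.
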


\begin{proof}
  Let $Z$ be a closed and Lebesgue measurable subset of $S^{1}$ such that $m(Z) > \pi$.
  Elementary measure theory ensures that $m(Z \cap Z^{\ast}) > 0$.
  It then follows from Theorem \ref{antipodes} that $(Z \cap Z^{\ast}, \delta)$, and hence $(Z, \delta)$,
  cannot be isometrically embedded in $\ell_{1}$.

That the maximum can be obtained follows from Theorem~\ref{halfcircle_plus}.
\end{proof}

\begin{corollary}\label{notS1}
$(S^1,\delta)$ is not $\ell_1^\infty$-embeddable.
\end{corollary}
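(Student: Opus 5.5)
This is a direct application of Theorem~\ref{antipodes} (or of the preceding corollary). Take $Y = [0,\pi]$. It is a nonempty closed subset of $[0,\pi]$, and $Y \cup Y^{\ast} = [0,\pi] \cup [\pi,2\pi] = S^1$. Since $m(Y) = \pi \neq 0$, Theorem~\ref{antipodes} says that $(S^1,\delta) = (Y \cup Y^{\ast},\delta)$ cannot be isometrically embedded in $\ell_1$. In the paper's terminology of Definition~\ref{lp-embed-def}, this is exactly the statement that $(S^1,\delta)$ is not $\ell_1^\infty$-embeddable.

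Alternatively, one can appeal to the previous corollary. $S^1$ is a closed subset of itself with $m(S^1) = 2\pi > \pi$, so it cannot be isometrically embedded in $\ell_1$.

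The only point requiring care is terminology. Here ``isometrically embedded in $\ell_1$'' means condition (D), which the paper calls $\ell_1^\infty$-embeddable. It is not condition (A), even though (A) is what the paper names $\ell_1$-embeddable. Once this is noted, no real obstacle remains.

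It is also worth recording what this shows when combined with Theorem~\ref{Sm_finitely_embeds}. $S^1$ is finitely $\ell_1$-embeddable but not $\ell_1^\infty$-embeddable. Together with the observation that a set embeds if and only if its closure does, this shows that $\ell_1$ lacks the separable finite embedding property, using the countable dense set of rational-angle points.
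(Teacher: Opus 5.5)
Your proposal is correct and matches the paper, which gives no separate proof of this corollary and treats it as an immediate consequence of Theorem~\ref{antipodes} (with $Y=[0,\pi]$, so $Y\cup Y^{\ast}=S^1$ and $m(Y)=\pi>0$), or equivalently of the preceding corollary, since $m(S^1)=2\pi>\pi$. You are also right to read ``isometrically embedded in $\ell_1$'' in those results as condition (D), i.e.\ $\ell_1^\infty$-embeddability.
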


A consequence of the above theorems is that while any closed subset of $S^1$ of Lebesgue measure zero (and in particular any countable closed subset of $S^1$) embeds isometrically in $\ell_1$, for any $\epsilon > 0$ there exists a closed set $X \subset S^1$ of measure $\epsilon$ which cannot be isometrically embedded. 

\section{Consequences}\label{Conseq}

\subsection{Embedding $L_1$ in $\ell_1$}
The circle $(S^1,\delta)$ then provides an example of a separable metric space for which every finite subset embeds isometrically in $\ell_1$, but for which the whole space does not. This shows that $\ell_1$ does not have the `separable finite subset embedding property' which is enjoyed by $L_1[0,1]$
and the finite dimensional spaces $\ell_1^n$. 
Of course it also indirectly gives a proof of the well-known fact that $L_1[0, 1]$ does not embed linearly isometrically in $\ell_1$. In fact we can say a little more.

\begin{theorem}
The classical Banach space $L_1[0, 1]$, considered purely as a metric space, does not isometrically embed into the sequence space $\ell_{1}$.
\end{theorem}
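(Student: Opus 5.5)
The plan is to argue by contradiction, using the circle as a witness. We know from Corollary~\ref{notS1} that $(S^1,\delta)$ is not $\ell_1^\infty$-embeddable. So it suffices to show that $(S^1,\delta)$ embeds isometrically, as a metric space, into $L_1[0,1]$. An isometric embedding $L_1[0,1]\to\ell_1$ (not assumed linear) composed with this would give an isometric embedding of $(S^1,\delta)$ into $\ell_1$, which is impossible.

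The key step is therefore an explicit isometric embedding of $(S^1,\delta)$ into $L_1[0,1]$. Section~\ref{Embed_S1} already gives one into $L_1(S^1)$, namely $x\mapsto \tfrac12\chi_{I_x}$ with $I_x=\{y\st\delta(x,y)\le\pi/2\}$. To verify it, note that for $x,y\in S^1$ the symmetric difference $I_x\triangle I_y$ consists of two arcs, each of length $\delta(x,y)$. Hence $\norm{\tfrac12\chi_{I_x}-\tfrac12\chi_{I_y}}_1=\tfrac12\cdot 2\delta(x,y)=\delta(x,y)$. Next, $L_1(S^1)$ with arc-length measure is linearly isometric to $L_1[0,1]$. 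Concretely, identify $S^1$ with $[0,2\pi)$ and use the rescaling $f\mapsto 2\pi f(2\pi\,\cdot)$, which is a linear isometry $L_1([0,2\pi))\to L_1[0,1]$. Composing the two maps gives the desired isometric embedding of $(S^1,\delta)$ into $L_1[0,1]$. Alternatively, one can simply cite the remark in Section~\ref{Embed_S1} that $(S^1,\delta)$ embeds in any nonzero non-purely-atomic $L_1$-space.

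The final step is purely formal: a composition of isometric maps is an isometric map. If $\Phi:L_1[0,1]\to\ell_1$ were an isometry of metric spaces, then $\Phi\circ\gamma$, with $\gamma$ the embedding above, would be an isometric embedding of $(S^1,\delta)$ into $\ell_1$. This contradicts Corollary~\ref{notS1}.

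I do not expect a real obstacle. All the difficulty sits in Corollary~\ref{notS1}, which comes from Theorem~\ref{antipodes}. The only point needing care is that no linearity of $\Phi$ is used anywhere, so the conclusion holds in the metric category and not merely for linear embeddings.
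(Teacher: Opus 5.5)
Your proposal is correct and is essentially the paper's argument. The paper gives no separate proof: it relies on the facts that $(S^1,\delta)$ embeds isometrically in $L_1(S^1)$, hence in any non-purely-atomic $L_1$-space such as $L_1[0,1]$, and that by Corollary~\ref{notS1} it does not embed isometrically in $\ell_1$, so composing with any metric isometry $L_1[0,1]\to\ell_1$ gives a contradiction; your explicit checks of the half-circle embedding and the rescaling isometry just fill in details the paper leaves implicit.
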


\subsection{Riemannian manifolds of negative type}

Theorem~5.4 of \cite{HKM} claims that every compact Riemannian manifold of negative type is simply connected. The space $(S^1,\delta)$, however, gives a counterexample to that statement, at least for one dimensional spaces. 
The results in \cite{HKM} do however show that simple connectedness does follow if one requires just a little more of the manifold.

\begin{theorem}\label{strict-connected}
A compact Riemannian manifold of strict negative type must be simply connected.
\end{theorem}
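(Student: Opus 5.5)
We argue by contraposition: if a compact Riemannian manifold $(M,g)$ with geodesic distance $d$ is not simply connected, we will exhibit a finite configuration in $M$ on which the negative type inequality (\ref{Dxixi2}) with $p=1$ fails to be strict. The model case is $(S^1,\delta)$ itself. For four equally spaced points $0,\pi/2,\pi,3\pi/2$ and $\bxi=(1,-1,1,-1)$ we get $\sum_{i,j}\delta(x_i,x_j)\xi_i\xi_j = 2\bigl(\pi\cdot 2 - \tfrac{\pi}{2}\cdot 4\bigr)=0$. (The two antipodal pairs contribute $+\pi$ each, and the four adjacent pairs contribute $-\pi/2$ each.) So $(S^1,\delta)$ is of negative type but not of strict negative type. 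The plan is to transplant this configuration into $M$ along a suitable closed geodesic.

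First, if $\pi_1(M)\neq 1$, then compactness gives a nontrivial free homotopy class that contains a closed curve of minimal length. This is standard: take a length-minimizing sequence, apply Arzel\`a--Ascoli, and use the fact that short loops are contractible below the injectivity radius. The minimizer is a closed geodesic $c$ of length $L>0$, and it is shortest in its free homotopy class. The key step is then to pick four points $x_1,\dots,x_4$ on $c$ at parameters $0,L/4,L/2,3L/4$ and to show that
\[ d(x_1,x_3)=d(x_2,x_4)=L/2, \qquad d(x_i,x_{i+1})=L/4 . \]
Once these hold, the same $\bxi=(1,-1,1,-1)$ gives the sum $2(L+L-4\cdot L/4)=0$ with $\bxi\neq\zero$, which contradicts strict negative type.

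The distance claims are the main obstacle. The upper bounds come for free, from the arcs of $c$. For a lower bound on $d(x_1,x_3)$, suppose a path $\sigma$ from $x_1$ to $x_3$ has length $<L/2$. Concatenating $\sigma$ with each of the two halves of $c$ gives two loops, each of length $<L$, whose product is freely homotopic to $c$. Hence at least one of them is not null-homotopic. The complication is that this loop may not lie in the class of $c$, so "shortest in its class" alone does not finish the argument. To handle this, I would choose $c$ more carefully: let $c$ be a shortest non-contractible closed curve overall, so that its length $L$ is the systole of $M$. Then every non-contractible loop has length $\ge L$. Both loops above have length $<L$, so both are contractible, and then $c$ is contractible, a contradiction. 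Hence $d(x_1,x_3)= L/2$, and the same argument gives $d(x_2,x_4)=L/2$.

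The adjacent distances follow similarly: any shortcut between $x_1$ and $x_2$ shorter than $L/4$ would yield, by the same splitting argument, a non-contractible loop of length $<L$. Alternatively, the adjacent distances follow from the triangle inequality, since $L/2=d(x_1,x_3)\le d(x_1,x_2)+d(x_2,x_3)$ and each term is $\le L/4$. The existence of a systolic non-contractible geodesic is exactly what this argument requires, and it is the step where compactness is used. I expect it to be essentially the argument of \cite{HKM}, adapted so that it concludes failure of strictness rather than failure of negative type.
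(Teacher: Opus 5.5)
Your argument is correct, but it is not the paper's route: the paper proves nothing directly here. It observes that a compact Riemannian manifold is a compact length space, since geodesics join any two points, and then invokes \cite[Corollary 5.2]{HKM}.

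What you have written is essentially a self-contained proof of that cited corollary in the manifold setting:
\begin{enumerate}
\item A systolic loop $c$ of length $L>0$ exists by compactness (positive injectivity radius together with Arzel\`a--Ascoli).
\item Your splitting argument shows that points at arc distance $L/2$ on $c$ are at distance exactly $L/2$ in $M$.
\item The triangle inequality then forces the adjacent distances to be $L/4$.
\item The resulting scaled $4$-cycle gives equality in (\ref{Dxixi2}) with $\bxi=(1,-1,1,-1)\neq\zero$.
\end{enumerate}

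The citation is shorter and applies in the broader setting of compact length spaces treated in \cite{HKM}. Your route has two advantages.
\begin{itemize}
\item It shows exactly why only \emph{strict} negative type is lost. The obstruction is the same four-point configuration that shows $(S^1,\delta)$ is of negative type but not of strict negative type, which is the point the paper makes in correcting \cite[Theorem 5.4]{HKM}.
\item It also isolates the single place where compactness enters, namely the existence of a systolic loop.
\end{itemize}

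Running your splitting argument on an arbitrary pair of points of $c$ gives more. Compare the short arc of length $t\le L/2$ with the long arc of length $L-t$; both resulting loops have length $<L$. Hence all of $c$ is isometric to $(S^1,\tfrac{L}{2\pi}\delta)$. This is the kind of statement the paper later borrows from \cite[Lemma 5.1]{HKM} for metric graphs.

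One small slip: the final sum should read $2\bigl(\tfrac{L}{2}+\tfrac{L}{2}-4\cdot\tfrac{L}{4}\bigr)=0$. As written, $2(L+L-4\cdot L/4)$ equals $2L$.
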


\begin{proof}
Let $(\calM,d)$ be a compact Riemannian manifold. Then there are geodesics between any two points of the space, and so the space is a `compact length space'. It follows immediately now from 
\cite[Corollary 5.2]{HKM} that if $(\calM,d)$ is of strict negative type then it is simply connected.
\end{proof}

The compactness assumption here is needed: the punctured plane is of strict negative type but not simply connected.

Theorem \ref{strict-connected} is closely related to a  result of Lafont and Prassidis \cite{LP}
(see Theorem~\ref{LP-Theorem} below) that links the simple connectedness of Riemannian manifolds to their roundness.
The concepts of roundness and generalized roundness of a metric space were introduced by Enflo~\cite{En1, En2, En3} in the late 1960s. 

\begin{definition}\label{round-defs} Let $(X,d)$ be a metric space and suppose that $p \ge 0$. 
\begin{enumerate}
  \item $(X,d)$ has \textit{roundness} $p$ if
  \[ d(a_1,a_2)^p + d(b_1,b_2)^ p 
       \le \sum_{1 \le i,j \le 2} d(a_i,b_j)^p\]
for every choice of points $a_1,a_2,b_1,b_2 \in X$.

  \item $(X,d)$ has \textit{generalized roundness} $p$ if
     \[ \sum_{1 \le i < j \le n} d(a_i,a_j)^p
        + \sum_{1 \le i < j \le n} d(b_i,b_j)^p
        \le \sum_{1 \le i,j \le n} d(a_i,b_j)^p \]
        for every integer $n \geq 2$ and every choice of points
        $a_1,\dots,a_n,b_1,\dots,b_n \in X$.
\end{enumerate}
\end{definition}

We note that generalized roundness $p$ implies roundness $p$ for every metric space $(X,d)$. For further discussion of these concepts, see Enflo~\cite{En1, En2, En3}. We also recall the theorem of Lennard, Tonge and Weston~\cite{LTW}, which asserts that a metric space $(X,d)$ has generalized roundness $p$ if and only if it is of $p$-negative type. Consequently, the supremal $p$-negative type $\wp(X,d)$ coincides with the supremum of the set of all $p \geq 0$ for which $(X,d)$ has generalized roundness $p$, the latter being the quantity considered by Enflo in~\cite{En2}.

\begin{theorem}\label{LP-Theorem} \cite{LP} A non-simply-connected, compact, Riemannian manifold has (maximal) roundness $1$. Equivalently,
a compact Riemannian
manifold with (maximal) roundness greater than $1$ must be simply connected.
\end{theorem}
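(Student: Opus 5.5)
The plan is to show two things: every metric space has roundness $1$, and a non-simply-connected compact Riemannian manifold contains an isometric copy of a round circle, which fails roundness $p$ for every $p>1$.

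\emph{Roundness $1$ always holds.} Given $a_1,a_2,b_1,b_2$, the triangle inequality gives $d(a_1,a_2)\le d(a_1,b_1)+d(b_1,a_2)$ and $d(b_1,b_2)\le d(b_1,a_1)+d(a_1,b_2)$. Adding these yields at most $2d(a_1,b_1)+d(a_2,b_1)+d(a_1,b_2)$, which is not yet the required bound. The symmetric version works instead: average the two chains through $b_1$ and through $b_2$ for $d(a_1,a_2)$, and similarly through $a_1$ and $a_2$ for $d(b_1,b_2)$. This gives exactly $d(a_1,a_2)+d(b_1,b_2)\le\sum_{i,j}d(a_i,b_j)$. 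So the maximal roundness is at least $1$, and it remains to exclude every $p>1$.

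\emph{A shortest non-contractible loop.} Let $(\calM,d)$ be compact and not simply connected. I would take a non-contractible closed curve $c$ of minimal length $L>0$. Existence is the main obstacle, and I would argue it as follows. Compactness gives a positive injectivity radius $r$, and any two closed curves that are uniformly within $r$ of each other are freely homotopic, via geodesic straight-line homotopy. Take a minimizing sequence of non-contractible loops, parametrized proportionally to arc length with lengths bounded. By Arzel\`a--Ascoli a subsequence converges uniformly to a loop $c$. By lower semicontinuity of length, $\mathrm{len}(c)\le\liminf$ of the lengths. By the homotopy remark, $c$ is freely homotopic to the tail of the sequence and hence non-contractible. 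Finally $L>0$, since every loop of length less than $2r$ lies in a geodesic ball and is contractible.

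\emph{The loop is an isometric circle.} Next I would show that for any two points $x=c(s)$ and $y=c(t)$, the distance $d(x,y)$ equals the shorter arc length of $c$ between them. Suppose instead that there is a path $\sigma$ from $x$ to $y$ with $\mathrm{len}(\sigma)<\min(\ell_1,\ell_2)$, where $\ell_1,\ell_2$ are the lengths of the two arcs $\alpha_1,\alpha_2$ of $c$ and $\ell_1+\ell_2=L$. Form the loops $\alpha_1\sigma^{-1}$ and $\sigma\alpha_2$. Their concatenation is homotopic to $c$, so at least one of them is non-contractible. Each has length strictly less than $L$, which contradicts minimality. Therefore $(c,d)$ is isometric to a circle of circumference $L$ with the arc-length metric, that is, to a scaled copy of $(S^1,\delta)$.

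\emph{Failure of roundness $p>1$.} Choose four equally spaced points on $c$: $a_1,a_2$ antipodal and $b_1,b_2$ antipodal, each $b_j$ lying at arc distance $L/4$ from both $a_i$. Then $d(a_1,a_2)=d(b_1,b_2)=L/2$ and $d(a_i,b_j)=L/4$. Roundness $p$ would require $2(L/2)^p\le 4(L/4)^p$, which is equivalent to $2^p\le 2$, that is, $p\le1$. Hence the maximal roundness is exactly $1$. The equivalent contrapositive formulation in the statement follows immediately.
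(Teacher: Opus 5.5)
Your proof is correct. The paper gives no proof of this statement; it is quoted from \cite{LP}, so there is no internal argument to compare against.

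Your route is the standard one: a shortest non-contractible loop is an isometrically embedded circle, and two antipodal pairs at quarter spacing violate roundness $p$ for every $p>1$. It uses the same two ingredients the paper relies on elsewhere. The first is an isometric copy of a rescaled $(S^1,\delta)$ inside the space, as in the lemma taken from \cite{HKM} for metric graphs. The second is the two-antipodal-pairs configuration that gives $\wp\le 1$ in Proposition~\ref{mgr_connected}.

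Your argument also yields slightly more than the statement. Roundness is generalized roundness with $n=2$, so your four points show directly that such a manifold is not of $p$-negative type for any $p>1$; this is the consequence the paper draws right after the theorem. In addition, the isometric circle together with Corollary~\ref{notS1} shows the manifold is not $\ellinf{1}$-embeddable.

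Two points are used tacitly and should be stated:
\begin{enumerate}
\item The limit loop $c$ is automatically parametrized proportionally to arc length. It is $L$-Lipschitz on a parameter interval of length one and has length exactly $L$, so each sub-arc has length exactly $L$ times its parameter length. This is what lets you identify $\min(\ell_1,\ell_2)$ with the arc-length metric and conclude that $c$ is injective.
\item The minimization should be taken over rectifiable loops in a connected manifold without boundary. Then $\alpha_1\sigma^{-1}$ and $\sigma\alpha_2$ are admissible competitors, and the injectivity-radius homotopy argument applies.
\end{enumerate}
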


It follows immediately from this that if a compact Riemannian manifold is of $p$-negative type for any $p > 1$ then it is simply connected. Note that an infinite metric space can be of strict $1$-negative type without being of $p$-negative type for any $p > 1$ (see \cite[Theorem~5.7]{DW2008}), so Theorem~\ref{strict-connected} strengthens this slightly.

The following result, Corollary 5.6 of \cite{HKM}, shows that other standard manifolds have rather different embedding properties to spheres. 

\begin{theorem}\label{HKM-products} Let $m \geq 1$. A Riemannian product manifold, where $S^m$ is one of the factors, is not of negative type.
\end{theorem}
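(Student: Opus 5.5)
The plan is to reduce to a flat cylinder and then find an explicit finite configuration violating negative type. Write the product as $\mathcal{M} = S^m \times N$ with the product Riemannian metric. Its geodesic distance satisfies
\[
d\bigl((x,y),(x',y')\bigr) = \bigl(\delta(x,x')^2 + d_N(y,y')^2\bigr)^{1/2}.
\]
Since negative type passes to subsets, it suffices to find a finite subset of $\mathcal{M}$ that is not of negative type.

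First I would reduce to a cylinder. A great circle $C \subseteq S^m$ carries, as a subset of $(S^m,\delta)$, exactly the arc-length metric, so it is a copy of $(S^1,\delta)$. In $N$ (assuming $\dim N \ge 1$) choose a short minimizing geodesic segment; it is isometric to an interval $[0,\varepsilon]$. By the product formula above, $C \times [0,\varepsilon]$ sits isometrically in $\mathcal{M}$ as a piece of the flat cylinder $S^1 \times [0,\varepsilon]$ with the $\ell_2$-product metric. Rescaling the whole configuration changes nothing about negative type, so it remains to show that $S^1 \times [0,\varepsilon]$ contains a finite non-negative-type subset. I would look for one with prescribed angles and small heights.

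Next I would use the generalized roundness form of $1$-negative type (Lennard--Tonge--Weston, recalled above). We need points $a_1,\dots,a_n,b_1,\dots,b_n$ with
\[
\sum_{i<j} d(a_i,a_j) + \sum_{i<j} d(b_i,b_j) > \sum_{i,j} d(a_i,b_j).
\]
The idea is to start from a configuration on the circle at height $0$ where equality holds; such configurations exist because the circle embeds in $L_1$ via half-circle indicators. An example is the two interlaced antipodal pairs $a=\{0,\pi\}$, $b=\{\pi/2,3\pi/2\}$, and more generally regular $2k$-gons split into alternate antipodal pairs. Then I would lift the points to small heights $h_i$. For a pair with angular distance $\theta>0$, the distance expands as $\theta + (h_i-h_j)^2/(2\theta) + O(h^4)$. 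So the defect (right side minus left side) becomes a quadratic form $Q(h)$ in the heights, with coefficients $\pm 1/(2\theta_{ij})$. The goal is a height vector with $Q(h)<0$. Note that $Q$ weights pairs at small angular distance more heavily, while antipodal pairs enter the left side with the small weight $1/(2\pi)$.

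The main obstacle is exactly this: exhibiting a configuration and heights for which the quadratic form is indefinite. For the simplest four-point configuration, a quick check suggests $Q$ stays nonnegative. So I expect to need more points, for instance a regular $2k$-gon with alternating heights $\pm h$, or points clustered near antipodes where the $1/\theta$ weights are large. One can also allow some circle points to coincide angularly and differ only in height. If the perturbative computation proves unwieldy, I would instead test candidate configurations directly with S\'anchez's formula (Theorem~\ref{Sanchez}). Concretely, I would show $\wp<1$ by finding $p<1$ at which $\det(D_p)=0$ or $\langle D_p^{-1}\one,\one\rangle=0$, starting with $n=6$ or $8$ points on two horizontal circles at heights $0$ and $h$. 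Once a single finite subset of the cylinder fails negative type, the reduction in the second paragraph finishes the proof.
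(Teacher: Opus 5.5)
\paragraph{The gap.} Your reduction is sound. A great circle of $S^m$ times a piece of $N$ is an isometric copy of part of the $\ell_2$-product cylinder. In fact two distinct points $y\neq y'$ of $N$ already suffice, which is why the paper can prove its version for $X_0\times S^1$ with $X_0$ any metric space having two points. The gap is that the one substantive step, exhibiting a finite subset of the cylinder that violates negative type, is never carried out. Your last paragraph is a search plan, not a configuration, so as written only the reduction is proved.

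Your quick check on the square is right, and it shows why the plan stalls there. Take $a=\{0,\pi\}$, $b=\{\pi/2,3\pi/2\}$ and heights $h_1,\dots,h_4$ in cyclic order, and put $S=\sum_i(h_i-h_{i+1})^2$ (indices mod $4$). The second-order defect is $\frac{1}{\pi}S-\frac{1}{2\pi}\bigl[(h_1-h_3)^2+(h_2-h_4)^2\bigr]$. Averaging $(h_1-h_3)^2\le 2[(h_1-h_2)^2+(h_2-h_3)^2]$ with $(h_1-h_3)^2\le 2[(h_3-h_4)^2+(h_4-h_1)^2]$ gives $(h_1-h_3)^2\le S$, and likewise $(h_2-h_4)^2\le S$. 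So the defect is $\ge 0$, and a weighted Cauchy--Schwarz argument does the same for every inscribed rectangle. The reason is that in these splits the only same-class pairs are antipodal. Hence every coefficient $1/(2\theta)$ with $\theta<\pi$ sits on the right-hand side.

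\paragraph{The missing idea.} You need two points carrying weights of the same sign that are close in angle but at different heights. The paper does this in the extreme case $\theta=0$, which your expansion does not cover and where the effect is first order in $h$. Take $y_1,\dots,y_4$ at angles $0,\pi/2,\pi,3\pi/2$ and height $0$, and $y_5$ at angle $0$ and height $h>0$. With $D=(d(y_i,y_j))$ and $\xi^0=(1,-1,1,-1,0)$ one has $(D\xi^0)_i=0$ for $i\le 4$. Also $(D\xi^0)_5=c_h:=h+\sqrt{\pi^2+h^2}-2\sqrt{\pi^2/4+h^2}>0$, since squaring reduces this to $\sqrt{\pi^2+h^2}>h$. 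Hence for $\xi=(1-t,-1,1,-1,t)$, which sums to $0$,
\[
\sum_{i,j} d(y_i,y_j)\,\xi_i\xi_j = 2t\,c_h-2t^2h>0 \qquad (0<t<c_h/h).
\]
The paper packages this same configuration through the determinant. With $\alpha=2h/\pi$, $A=\sqrt{\alpha^2+1}$ and $B=\sqrt{\alpha^2+4}$, it shows $\det D$ is a negative multiple of $(4A-2B-2\alpha)^2\neq 0$. Since $\det(D_p)\to 4$ as $p\to 0^+$, S\'anchez's formula even yields $\wp<1$.

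Your second-order route can also be completed, though not with the square alone. Take the points $(0,s),(\pi,s),(\pi/6,-s),(7\pi/6,-s),(\pi/2,0),(3\pi/2,0)$ with $\xi=(1,1,1,1,-2,-2)$. These give $\sum_{i,j}d\,\xi_i\xi_j=0$ at $s=0$ and $\frac{118}{5\pi}s^2+O(s^4)>0$ for small $s>0$. Either way, it is an explicit computation of this kind that carries the theorem, and your proposal stops just before it.
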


In fact, the Riemannian structure is not needed at all here. Given a metric space $(X_0,d_0)$ we can form the $\ell_2$-product metric space $X = X_0 \times S^1$ with metric $d((x,t),(x',t')) = \sqrt{d_0(x,x')^2+\delta(t,t')^2}$.

\begin{theorem}
Suppose that $(X_0,d_0)$ is a metric space with at least two points. Then the $\ell_2$-product metric space $X = X_0 \times S^1$ has supremal $p$-negative type strictly less than $1$.
\end{theorem}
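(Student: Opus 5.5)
The plan is to show that $X$ fails to be of $1$-negative type. Since the set of $p$ for which $X$ has $p$-negative type is a closed interval $[0,\wp(X)]$ by Schoenberg's result, this forces $\wp(X)<1$. Negative type is inherited by subspaces, so we may fix two points $x\neq x'$ in $X_0$ with $a=d_0(x,x')>0$. It then suffices to produce finitely many points of $\{x,x'\}\times S^1$ and weights $\bxi$ with $\sum_i\xi_i=0$ such that $\sum_{i,j} d(z_i,z_j)\xi_i\xi_j>0$. The product metric only sees $X_0$ through $a$, so the whole computation takes place in $\{0,a\}\times S^1\subseteq \mR\times S^1$ with the $\ell_2$-product metric.

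The idea is to start from configurations on a single circle where negative type holds with \emph{equality}, and then push some of the points onto the second fibre. Using the embedding $t\mapsto \frac12\chi_{I_t}$ into $L_1(S^1)$, we have $\delta(s,t)=\frac12 m(I_s\triangle I_t)$. Hence for $\sum\xi_i=0$,
\[
\sum_{i,j}\delta(t_i,t_j)\xi_i\xi_j=-\frac12\int_{S^1}\Bigl(\sum_i \xi_i\chi_{I_{t_i}}(y)\Bigr)^2\,dm(y),
\]
and equality with $0$ holds exactly when every closed half-circle carries total weight $0$. The model case is $2n$ equally spaced points $t_k=k\pi/n$ with alternating weights $\xi_k=(-1)^k$ and $n$ even, so that each half-circle contains $n+1$ consecutive points with zero alternating sum. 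This gives a family of equality configurations.

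Next we assign each point $t_k$ a label $\epsilon_k\in\{0,a\}$ and compute the quadratic form $Q(a)=\sum_{k,l}\sqrt{(\epsilon_k-\epsilon_l)^2+\delta(t_k,t_l)^2}\,\xi_k\xi_l$. Only the pairs with different labels change, each increasing from $\delta$ to $\sqrt{a^2+\delta^2}$. So $Q(a)=\sum_{\epsilon_k\ne\epsilon_l}\xi_k\xi_l\bigl(\sqrt{a^2+\delta_{kl}^2}-\delta_{kl}\bigr)$, which is a sum of same-sign (gain) minus opposite-sign (loss) terms. The increment $\sqrt{a^2+\delta^2}-\delta$ is decreasing in $\delta$. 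The goal is therefore a labelling in which mislabelled \emph{same-sign} pairs are close together while mislabelled \emph{opposite-sign} pairs are far apart.

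My first attempt is the $4$-point case: points $0,\pi/2,\pi,3\pi/2$, weights $+,-,+,-$, and the labelling that swaps the fibre of one antipodal pair. A quick check shows that this labelling alone gives $Q<0$, so the simple $4$-point ansatz is not enough. I would then move to the $2n$-gon, or duplicate points: the fibre $\{a\}\times S^1$ also contains the rotated equality configurations, so one can superpose an equality configuration on the $0$-fibre with a rotated copy carrying weights of the opposite sign on the $a$-fibre. The cross terms are then $-\sum\xi_k\xi'_l\sqrt{a^2+\delta(t_k,t'_l)^2}$, and the rotation angle is to be chosen so that positive cross-pairs are near each other. Since $f(\delta)=\sqrt{a^2+\delta^2}$ is strictly convex in $\delta$ while $\delta$ itself is "linear" on the circle, the cross sum should become a second-difference of $f$ along the circle of a definite sign. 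That is the mechanism I would exploit. If a clean closed form resists, I would fall back on Theorem~\ref{Sanchez} applied to an explicit small configuration, verifying that $\ip<D_1^{-1}\one,\one>$ has the wrong sign.

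The main obstacle is exactly this choice of configuration and weights. Every half-circle-balanced configuration on one fibre is only at the boundary, and the perturbation must have a sign that survives for \emph{every} fixed $a>0$, not just small or large $a$. I expect to need to treat the cross sum as a convexity statement for $f$, using a rotated alternating configuration, rather than a naive relabelling of a single polygon.
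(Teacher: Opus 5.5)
\textbf{Gap: no witness is ever produced.} Your reduction is fine. If one finite subset of $X$ fails $1$-negative type, then $1\notin[0,\wp(X,d)]$, so $\wp(X,d)<1$. This is slightly more direct than the paper's argument via continuity of $p\mapsto\det D_{Y,p}$ and S\'anchez's formula.

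The trouble is that the entire content of the theorem is an explicit witness. You need points of $\{x,x'\}\times S^1$ and weights $\bxi$ with $\sum_i\xi_i=0$ and $\sum_{i,j}d(z_i,z_j)\xi_i\xi_j>0$ for \emph{every} $a>0$, and you never specify or check one. Your only concrete attempt, relabelling an antipodal pair of the alternating $4$-point configuration, fails, as you say. The remaining suggestions ($2n$-gons, rotated copies at an unspecified angle, a S\'anchez computation on an unspecified configuration) are left open. As written, this is a strategy rather than a proof.

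Two minor corrections. The $L_1$ identity should read $\sum_{i,j}\delta(t_i,t_j)\xi_i\xi_j=-\int\bigl(\sum_i\xi_i\chi_{I_{t_i}}\bigr)^2\,dm$. Also, for the $2n$-gon it is \emph{almost every} half-circle that contains $n$ consecutive vertices, which is why $n$ even gives equality. Closed half-circles with endpoints at vertices contain $n+1$ vertices and carry weight $\pm1$.

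\textbf{How to close it with your own ingredients.} Put $f(\delta)=\sqrt{a^2+\delta^2}$ and $\Delta=f(0)-2f(\frac{\pi}{2})+f(\pi)$. Then $\Delta>0$, since $f''(\delta)=a^2/(a^2+\delta^2)^{3/2}>0$ for $a>0$.

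The paper uses the five points $(x,0),(x,\frac{\pi}{2}),(x,\pi),(x,\frac{3\pi}{2}),(x',0)$ and shows $\det D_{Y,1}<0$. In your perturbation language this works as follows. The vector $\eta=(1,-1,1,-1,0)$ is an equality vector, with $(D\eta)_i=0$ for $i\le 4$ and $(D\eta)_5=\Delta$. Take $\zeta=(-1,0,0,0,1)$. Then
\[
Q(t\eta+\zeta)=2t\Delta-2a>0 \quad\text{for } t>a/\Delta .
\]

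There is an even simpler witness. Your ``rotated opposite-sign copy'' with rotation angle $\frac{\pi}{2}$ is just the alternating weights $1,-1,1,-1$ placed on \emph{both} fibres of $\{x,x'\}\times\{0,\frac{\pi}{2},\pi,\frac{3\pi}{2}\}$. The within-fibre sums vanish, and the cross terms give $Q=8\Delta>0$. This is exactly the second-difference convexity mechanism you anticipated. Inserting either witness completes your argument.
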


\begin{proof} 
Choose $x_0 \ne x_1 \in X_0$, and let $\alpha = \frac{2}{\pi} d_0(x_0,x_1) > 0$.
Let
  \[ Y = \{\vecy_i\}_{i=1}^5
     = \{ (x_0,0),(x_0,\textstyle{\frac{\pi}{2}}), (x_0,\pi),
         (x_0,\textstyle{\frac{3\pi}{2}}),(x_1,0) \}. \]
For $p \ge 1$, let $D_{Y,p}$ denote the $p$-distance matrix $D_{Y,p} = [d(\vecy_i,\vecy_j)^p]_{i,j=1}^5$. As for any 5 point metric space, $\det(D_{X,p}) \to 4$ as $p \to 0^+$. On the other hand one can verify that, with $A = \sqrt{\alpha^2+1}$ and $B = \sqrt{\alpha^2+4}$,
  \[ \left(\frac{2}{\pi}\right)^6 \det(D_{Y,1})
   = 16AB + 16A\alpha - 8B\alpha - 24\alpha^2 - 32
   =-(4A - 2B - 2\alpha)^2 \le 0.\]
This can only be zero if $2A = B+\alpha$. Squaring both sides and writing everything in terms of $\alpha$, this reduces to requiring that $\alpha = \sqrt{\alpha^2+4}$, which is clearly impossible.

Thus, $\det(D_{Y,1}) < 0$ for any $\alpha > 0$. As $p \mapsto \det(D_{Y,p})$ is continuous, there must exist $p_0 < 1$ such that $\det(D_{Y,p_0}) = 0$. It follows from S{\'a}nchez's formula (Theorem~\ref{Sanchez}) that the supremal $p$-negative type of $Y$ is at most $p_0$, and hence the same is true for $(X,d)$. 
\end{proof}

\begin{corollary}
If $(X_0,d_0)$ is a metric space with at least two points then the $\ell_2$-product metric space $X = X_0 \times S^1$ is not $L_1$-embeddable nor $\ell_1$-embeddable.
\end{corollary}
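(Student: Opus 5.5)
The corollary follows by combining the preceding theorem with the chain of implications in Theorem~\ref{Properties-Scale}.

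Assume $(X_0,d_0)$ has at least two points, and let $X = X_0 \times S^1$ carry the $\ell_2$-product metric. By the preceding theorem, the supremal $p$-negative type satisfies $\wp(X,d) < 1$. Since the set of $p$ for which $X$ has $p$-negative type is the closed interval $[0,\wp(X,d)]$, the value $p=1$ lies outside it. Hence $(X,d)$ is not of negative type.

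Now consider Theorem~\ref{Properties-Scale}, read contrapositively. Every $L_1$-embeddable metric space is finitely $\ell_1$-embeddable, hence hypermetric, hence of negative type. So a space that fails to be of negative type cannot be $L_1$-embeddable. The same chain shows that $\ell_1$-embeddability implies $\ell_1^\infty$-embeddability, which implies $L_1$-embeddability. Therefore $X$ is neither $L_1$-embeddable nor $\ell_1$-embeddable, and in particular not $\ell_1^\infty$-embeddable either.

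The argument can also be carried out on a finite subspace. The five-point subset $Y$ from the previous proof already satisfies $\det(D_{Y,1})<0$. By S\'anchez's formula (Theorem~\ref{Sanchez}), $Y$ is not of $1$-negative type. This gives a finite subset of $X$ that does not embed isometrically in $\ell_1$, matching the claim in the introduction about the cylinder and the flat torus.

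There is no real obstacle here. The only care needed is in passing from $\wp(X,d)<1$ to the failure of $1$-negative type, and that step is immediate from Schoenberg's interval property.
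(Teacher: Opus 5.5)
Your proposal is correct and is the intended argument: the preceding theorem gives $\wp(X,d)<1$, so $X$ is not of negative type, and the chain in Theorem~\ref{Properties-Scale} then rules out $L_1$-, $\ell_1^\infty$- and $\ell_1$-embeddability. In your optional finite-subset remark, $\det(D_{Y,1})<0$ gives failure of $1$-negative type via S{\'a}nchez's formula only after the intermediate-value step, using $\det(D_{Y,p})\to 4$ as $p\to 0^+$ as in the theorem's proof; you could instead note that a $5$-point space of negative type has a distance matrix with at most one positive eigenvalue, which forces $\det(D_{Y,1})\ge 0$.
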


The above results apply to Riemannian manifolds such as the cylinder $\mR \times S^1$ and the flat torus $S^1 \times S^1$ (for which the Riemannian product metric agrees with the $\ell_2$-product used above). It would be of interest to know just what the supremal $p$-negative types of these two spaces are.
Using S{\'a}nchez's formula, one finds numerically that the supremal p-negative types of both spaces are less than 0.9533.

It is worth noting that it is straightforward to check that under the $\ell_1$-product metric, if $X$ and $Y$ are $\ell_1$ embeddable, then so is $X \times Y$. 
\subsection{Supremal $p$-negative type of subsets of $S^1$}

If $X \subseteq S^1$ then we must have $\wp(X,\delta)  \in [1,\infty]$. One might ask whether all such values can be obtained.
The first thing to note is that as soon as $X$ contains 3 points in any closed half circle, and in particular if $X$ contains 4 or more points, then $\wp(X,\delta) \le 2$. This is because one then has a subset $X_0 = \{x_1,x_2,x_3\} \subseteq X$ with $\delta(x_1,x_3) = \delta(x_1,x_2) + \delta(x_2,x_3)$, and this condition ensures that $\wp(X_0,\delta) = 2$. 

Among connected subsets of $S^1$, only the values $1$ and $2$ are possible.
Notably, this is another indication of the sensitivity of the supremal $p$-negative type to small perturbations of the metric. For example, as is shown in \cite{LTW}, $\wp(\ell_p^3) = p$ for $0 < p \le 2$, but $\wp(\ell_p^3) = 0$ for $p > 2$.

\begin{proposition}\label{mgr_connected} Let $m$ denote the one-dimensional Lebesgue measure on $S^{1}$. Suppose that $X$ is a connected subset of $S^1$. Then
  \[ \wp(X,\delta) = \begin{cases}
      2,  & \text{if $m(X) \le \pi$,} \\
      1,  & \text{if $m(X) > \pi$.}
      \end{cases} \]
\end{proposition}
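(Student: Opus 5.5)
Since $X$ is connected, $X$ is an arc, a point, or all of $S^1$; I will split according to whether $m(X)\le\pi$ or $m(X)>\pi$. The degenerate case of a single point (or the empty set) needs a word. There the negative type inequality holds trivially for every $p$, so $\wp=\infty$. I will assume, as the statement implicitly does, that $X$ has at least two points, and hence is a nondegenerate arc or the whole circle.

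\emph{Case $m(X)\le\pi$.} Here $X$ is contained in a closed half circle. For any two points of $X$, the shorter arc between them lies inside $X$, so $(X,\delta)$ is isometric to an interval of $\mR$ with the usual metric. Subsets of $\mR$ embed in $\ell_2$, so by Schoenberg's theorem they have $2$-negative type, giving $\wp(X,\delta)\ge 2$. For the reverse inequality, $X$ is a nondegenerate interval and so contains three collinear points $x_1<x_2<x_3$ with $\delta(x_1,x_3)=\delta(x_1,x_2)+\delta(x_2,x_3)$. By the remark preceding the proposition, $\wp(\{x_1,x_2,x_3\},\delta)=2$. To check this concretely, take $\bxi=(1,-2,1)$ for equally spaced points with gap $a$. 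Then $\sum_{i,j} d_{ij}^p\xi_i\xi_j = 2a^p(-4+(2)^p)$, which is positive for $p>2$. Hence $\wp(X,\delta)=2$.

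\emph{Case $m(X)>\pi$.} The lower bound $\wp\ge 1$ holds for every subset of $S^1$, since $(S^1,\delta)$ is of negative type (it embeds in $L_1(S^1)$). For the upper bound I need finitely many points of $X$ violating $p$-negative type for each $p>1$. Since $X$ is an arc of length greater than $\pi$, it contains an arc of length $\pi+2\epsilon$ for some $\epsilon>0$, and this arc contains four points forming a square-like configuration that wraps past a semicircle. The natural choice is $0,\ \theta,\ \pi,\ \pi+\theta$ with $0<\theta\le 2\epsilon$ small, where the antipodal pairs are $\{0,\pi\}$ and $\{\theta,\pi+\theta\}$. Take $\bxi=(1,-1,1,-1)$. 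The distances are $\delta(0,\theta)=\theta$, $\delta(0,\pi)=\pi$, $\delta(0,\pi+\theta)=\pi-\theta$, $\delta(\theta,\pi)=\pi-\theta$, $\delta(\theta,\pi+\theta)=\pi$, and $\delta(\pi,\pi+\theta)=\theta$. The quadratic form is twice the sum over pairs of $\xi_i\xi_j d^p$:
\[2\bigl(-\theta^p+\pi^p-(\pi-\theta)^p-(\pi-\theta)^p+\pi^p-\theta^p\bigr)=4\bigl(\pi^p-(\pi-\theta)^p-\theta^p\bigr).\]
For $p>1$, strict superadditivity of $t\mapsto t^p$ gives $\pi^p>(\pi-\theta)^p+\theta^p$, so the form is positive. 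The four points therefore fail $p$-negative type for every $p>1$, and $\wp(X,\delta)\le 1$.

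The main obstacle is making sure the chosen test points lie in $X$ and that their arc-distances are as claimed. In particular, the distance between $0$ and $\pi+\theta$ must be computed around the short side, giving $\pi-\theta$; this is exactly where the condition $m(X)>\pi$ is used. Everything else is routine. Alternatively, in place of the explicit $\bxi$, one could invoke roundness: the four points satisfy $d(a_1,a_2)+d(b_1,b_2)=\sum d(a_i,b_j)$ at $p=1$, and for $p>1$ roundness fails, which by the Lennard--Tonge--Weston theorem also bounds $\wp$ by $1$.
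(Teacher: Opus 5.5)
Your proof is correct and takes the same route as the paper. The arc of length at most $\pi$ is isometric to an interval. For $m(X)>\pi$ you use two antipodal pairs $0,\theta,\pi,\pi+\theta$, and your vector $\bxi=(1,-1,1,-1)$ is exactly the null vector of $D_1$ behind the paper's remark that $\det(D)=0$. So you have replaced the appeal to S{\'a}nchez's formula with the direct check that the form equals $4\bigl(\pi^p-(\pi-\theta)^p-\theta^p\bigr)>0$ for $p>1$. Your caveat about the one-point case, where $\wp=\infty$, is also a correct refinement; the paper's proof excludes that case tacitly by taking $0<t\le\pi$.
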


\begin{proof} If $0 < t \le \pi$, then $(X_t,\delta)$ is isometric to the interval $[0,t]$ and hence $\wp(X_t,\delta) = 2$.

If $m(X) > \pi$ then $X$ must contain two pairs of antipodal points and this ensures that $\wp(X_t,\delta) \le 1$; see, for example, \cite[Section 2]{HKM}. Alternatively one can directly check that if $D$ is the corresponding distance matrix, then $\det(D) = 0$, and so this upper bound follows from
S{\'a}nchez's formula.
\end{proof}

If $X$ is a $3$ point set, then necessarily $(X,\delta)$ embeds isometrically in $\ell_2^2$ and so $\wp(X,\delta) \ge 2$. 
If $X_t = \Bigl\{0,t,2\pi-t \Bigr\}$ where $\frac{\pi}{2} \le t \le \frac{2\pi}{3}$, then the distance matrix for this space is 
  \[ D = \begin{pmatrix}
          0   & t   & t \\
          t   & 0   & 2\pi-2t \\ 
          t   & 2\pi-2t & 0
      \end{pmatrix}.  
            \]
The determinant of the $p$-distance matrix is 
  \[ \det(D_p) = 2\, t^{2p}\, (2 \pi - 2 t)^p,  \]
which is never zero. Inverting $D_p$ and adding the entries gives
  \[ \ip<D_p^{-1} \one,\one> = 0
  \iff (2 \pi - 2 t)^p - 4 t^p = 0. \]
For $\frac{\pi}{2} \le t < \frac{2\pi}{3}$ this has a first positive root at
  \[  \frac{2 \log 2}{\log(2 \pi - 2t) - \log t}, \]
which, by S{\'a}nchez's formula, is thus the value of $\wp(X_t,\delta)$. As $t$ increases from $\frac{\pi}{2}$ to $\frac{2\pi}{3}$, $\wp(X_t,\delta)$ increases continuously from $1$ to $\infty = \wp(X_{2\pi/3},\delta)$.

The question of which values in the interval $[1,2]$ are obtainable as $\wp(X,\delta)$ for some $X \subseteq S^1$ appears to be more difficult. If one chooses $n$ points equally spaced around the unit circle, one is essentially considering the cyclic graphs $C_n$. It is known \cite[Example~2.8]{San} that $\wp(C_5) = \log_2\Bigl(\frac{3+\sqrt{5}}{2}\Bigr) \approx 1.388$. More generally, by considering the roundness of these graphs, Horak et al. \cite[Corollary 3.2]{Hor} showed that $\wp(C_n) \to 1$ as $n \to \infty$. Scaling all the edges to have length $1$, and writing $D(n)$ for the distance matrix of $C_n$, one finds that for odd $n \ge 3$
  \[ \det(D(n)) = \frac{n^2-1}{4}, \qquad 
     \ip<D(n)^{-1} \one,\one> = \frac{4n}{n^2-1}. \]
Since neither of these can be zero and we know that $\wp(C_n) \ge 1$, this implies that $\wp(C_n) > 1$. It follows then that there are at least countably many values in $[1,2]$ which occur as $\wp(X,\delta)$ for some $X \subseteq S^1$. Numerical evidence suggests that in fact all values in $[1,2]$ are obtainable, and indeed obtainable from 4 point subsets of the circle, but we do not have a proof of this.
\begin{question}
If $1 < p < 2$, does there exist $X \subseteq S^1$ such that $\wp(X,\delta) = p$?
\end{question}

\subsection{Metric graphs}

Let $G = (V,E)$ be a connected edge-weighted graph. To avoid complications, we shall assume that $G$ has only finitely many vertices, and that there are no loops or multiple edges in the graph.

There are two natural ways to construct a metric space from the graph $G$.
The first is to consider the finite metric space $X_G$ whose points are the vertices of $G$, and for which the metric is the smallest
sum of edge weights for any path from one vertex to another.
A second possibility is to construct a compact connected metric space $M_G$ by considering each edge to be a part of the space. That is, for each $e = \{v,w\} \in E$ with weight (or length) $L_e$, we add a path of length $L_e$ to $X_G$ joining $v$ to $w$. Again we can use shortest paths to provide a metric $d_G$ on $M_G$. (Constructing $M_G$ more formally is of course possible, but that rarely provides better understanding; see for example \cite[Section~3.2.2]{BBI}.) If $G$ is a tree, then spaces of the second type are examples of separable $\mR$-trees.
It is well known that every separable $\mathbb{R}$-tree admits an isometric
embedding into $\ell_1$; see, for example, Aliaga \textit{et al}.\ \cite{Ali}.

The next lemma is an immediate consequence of \cite[Lemma 5.1]{HKM}.

\begin{lemma}
If $G$ contains a cycle then there exists $S \subseteq M_G$ such that $(S,d_G)$ is isometric to $(S^1,c \delta)$ for some $c > 0$.
\end{lemma}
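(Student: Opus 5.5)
Let $C = v_0 v_1 \cdots v_{k-1} v_0$ be a cycle in $G$ chosen so that it has minimal total length $L = \sum_{i} L_{e_i}$ among all cycles of $G$. (Some cycle exists, and there are only finitely many simple cycles, so a minimal one exists.) Let $S \subseteq M_G$ be the union of the closed edges of $C$, parametrized as a loop $\sigma : \mR/L\mZ \to S$ by arc length. The plan is to show that $d_G(\sigma(a),\sigma(b))$ equals the cyclic distance $\min(|a-b|, L-|a-b|)$. Then $(S,d_G)$ is isometric to $(S^1, c\delta)$ with $c = L/(2\pi)$.

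First, the easy inequality. The two arcs of $S$ joining $\sigma(a)$ to $\sigma(b)$ are paths in $M_G$ of lengths $|a-b|$ and $L-|a-b|$. Hence $d_G(\sigma(a),\sigma(b)) \le \min(|a-b|, L-|a-b|)$.

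For the reverse inequality, suppose some path $P$ in $M_G$ from $p=\sigma(a)$ to $q=\sigma(b)$ is strictly shorter than both arcs. We may take $P$ to be a shortest path. Since $P$ is shorter than both arcs, it is not contained in $S$, so it leaves $S$. Decompose $P$ into maximal subpaths lying in $S$ and excursions outside $S$. Pick one excursion $Q$. It leaves $S$ at a point $x$ and returns at a point $y$, and since it runs through edges not in $C$, both $x$ and $y$ are vertices of $C$. Moreover $x \ne y$, since otherwise we could shorten $P$.

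Because $P$ is a shortest path, $Q$ is a geodesic from $x$ to $y$. Its length is therefore at most the shorter arc of $C$ between $x$ and $y$, which is at most $L/2$, and it is strictly less than that arc somewhere along $P$ (otherwise every excursion could be replaced by an arc of $C$ and $P$ would lie in $S$). Now form the closed walk made of $Q$ followed by the longer arc of $C$ from $y$ back to $x$. This contains a cycle of $G$ of length less than $L$, contradicting the minimality of $C$.

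The delicate point is making this replacement argument precise. I would carry it out with vertices. Let $x, y$ be the first exit and last return of $P$ relative to $S$, and compare the length of $P$ with the lengths of the arcs of $C$ between $x$ and $y$. If the excursion is strictly shorter than the \emph{shorter} arc between $x$ and $y$, then together with that shorter arc it forms a cycle of length less than $L$. Otherwise the excursion can be replaced by an arc without increasing length. Iterating this replacement over the finitely many excursions produces a path inside $S$ of length at most that of $P$, which contradicts the assumption that $P$ is shorter than both arcs.

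The main obstacle is this last bookkeeping. It must also handle $p$ and $q$ lying in edge interiors, which is easy because any excursion begins and ends at vertices of $C$. Alternatively, as the paper indicates, one can simply invoke \cite[Lemma 5.1]{HKM}, which yields that a shortest closed geodesic in a compact length space that is not simply connected is an isometrically embedded circle.
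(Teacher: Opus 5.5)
Your proof is correct, but it does not follow the paper's route. The paper gives no argument of its own: it simply invokes \cite[Lemma 5.1]{HKM}, the general fact that a compact length space which is not simply connected contains an isometric copy of a circle. To use that, one must know that $M_G$ is a compact length space and that a cycle in $G$ makes $M_G$ non-simply-connected. The standard mechanism behind such a lemma is homotopical: take a shortest non-contractible loop; a shortcut would split it into two strictly shorter loops, both then contractible, which would make the original loop contractible.

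You replace the topology by combinatorics. The role of `non-contractible loop' is played by `cycle of $G$'. A shortcut then yields a genuinely shorter cycle, because a geodesic excursion between distinct vertices $x,y$ of $C$ is an edge path internally disjoint from $C$. It therefore closes up with either arc of $C$ into a simple cycle; the absence of multiple edges rules out a $2$-cycle. Your version gives a self-contained, elementary proof that also identifies the circle concretely as any shortest cycle of $G$. The citation gives brevity and applies well beyond graphs.

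Two small points. First, the replacement-and-iteration bookkeeping can be dropped. Applying minimality of $C$ to both arcs $A_1,A_2$ of $C$ between $x$ and $y$ gives $\ell(Q)\ge\max(\ell(A_1),\ell(A_2))\ge L/2$. Hence $\ell(P)\ge L/2$, which contradicts $\ell(P)<\min(|a-b|,L-|a-b|)\le L/2$ at once. Second, in your last paragraph the portion of $P$ between the \emph{first} exit and the \emph{last} return may re-enter $S$, so together with an arc it is only a closed walk, not a cycle. The argument should be run, as in your preceding paragraph, with maximal excursions whose interiors avoid $S$. Also, in your closing remark the object relevant to the general lemma is a shortest non-contractible loop, not a shortest closed geodesic.
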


\begin{theorem}
Let $G$ be as above. Then $(M_G,d_G)$ embeds isometrically in $\ell_1$ if and only if $G$ is a tree.
\end{theorem}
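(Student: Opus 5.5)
The proof has two directions, and each reduces to a result already available.

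For the forward direction, suppose $(M_G,d_G)$ embeds isometrically in $\ell_1$ and, for contradiction, that $G$ is not a tree. Since $G$ is connected, it must then contain a cycle. By the preceding lemma there is a subset $S \subseteq M_G$ with $(S,d_G)$ isometric to $(S^1,c\delta)$ for some $c>0$. Restricting the embedding of $M_G$ to $S$ gives an isometric embedding of $(S^1,c\delta)$ into $\ell_1$. Composing with the dilation $x \mapsto c^{-1}x$ of $\ell_1$ then gives an isometric embedding of $(S^1,\delta)$ into $\ell_1$. This contradicts Corollary~\ref{notS1}, so $G$ must be a tree.

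For the converse, suppose $G$ is a tree. Then $(M_G,d_G)$ is a compact $\mR$-tree, since it is a finite union of segments glued along a tree pattern. Compactness makes it separable. The cited result of Aliaga \textit{et al.}\ then shows that every separable $\mR$-tree embeds isometrically in $\ell_1$.

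If a self-contained argument is preferred, I would construct the embedding directly. Index the coordinates of $\ell_1$ by the edges, of which there are finitely many. Fix a root vertex $v_0$. Send a point $x$ lying on edge $e$ at distance $s$ from the endpoint nearer $v_0$ to the vector with coordinate $L_f$ for each edge $f$ on the unique path from $v_0$ to that endpoint, coordinate $s$ in the $e$-slot, and $0$ elsewhere. The $\ell_1$ distance between the images of two points is then the total length of the symmetric difference of their root paths, and this equals the geodesic distance in the tree.

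The main obstacle is confirming that the lemma really supplies an isometric copy of the circle, rather than merely a homeomorphic one. The restriction argument needs $d_G$ on $S$ to equal $c\delta$ exactly. This is exactly what \cite[Lemma 5.1]{HKM} asserts: choose a shortest cycle, which is then isometrically embedded. So I would simply invoke the lemma as stated.
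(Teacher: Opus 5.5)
Your proof is correct and follows the paper's argument. For the non-tree direction you use the isometric copy of the circle supplied by the lemma together with Corollary~\ref{notS1}, and for the tree direction you cite the Aliaga et al.\ result on separable $\mR$-trees. Your explicit rescaling by $c^{-1}$ spells out a step the paper leaves implicit. Your optional edge-indexed construction, which even embeds the tree into $\ell_1^{|E|}$, is also correct, and it replaces the citation with a direct argument.
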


\begin{proof}
From the above comment, we just need to show that if $G$ is not a tree, then it does not embed isometrically in $\ell_1$.
So suppose $G$ is not a tree, and thus contains a cycle.
By the lemma there exists a set $S \subseteq M_G$ which is isometric to a circle with arc-length metric. But by Corollary~\ref{notS1}, $(S,d_G)$ does not embed isometrically in $\ell_1$, and so certainly neither does the whole space $(M_G,d_G)$. 
\end{proof}

\section{Higher dimensions}

The question of identifying which subsets of $(S^m,\delta)$ embed isometrically in $\ell_1$ when $m > 1$ appears to be more complicated, even in the case $m=2$. Accordingly, we shall focus primarily on that case.
Recall from Theorem~\ref{Sm_finitely_embeds} that $(S^m,\delta)$ is finitely $\ell_1$-embeddable for all $m \ge 1$, so the substantial question concerns infinite subsets of $S^m$.

Here we limit ourselves to a few observations and questions. Note that on the scale of properties included in Theorem~\ref{Properties-Scale}, $(S^2,\delta)$ is $L_1$-embeddable, but not $\ell_1^\infty$-embeddable. Indeed, using characteristic functions of hemispheres, one may readily embed $(S^2,\delta)$ into $L_1(S^2,\mu)$, where $\mu$ is the natural surface measure on the sphere. The results from Section~\ref{Embed_S1} obviously imply that if $X \subseteq S^m$ contains more than half of any great circle, then $X$ does not embed isometrically in $\ell_1$. 

Some obvious questions one may ask include the following.
\begin{enumerate}
  \item Which infinite subsets of $S^2$ that are not contained in a great circle embed isometrically in $\ell_1$?
  \item Does every countable closed subset of $S^2$ embed isometrically in $\ell_1$?
  \item Does any infinite connected subset of $S^2$ which is not contained in a great circle embed isometrically in $\ell_1$?
  \item Does any subset of $S^2$ of positive surface measure embed isometrically in $\ell_1$?
\end{enumerate}

In what follows we shall parameterize $S^2$ by spherical polar coordinates $(\theta,\phi)$ with $0 \le \theta < 2\pi$ and $0 \le \phi \le \pi$. 

If $X \subseteq S^2$ lies inside some great circle, then the results of Section~\ref{Embed_S1} apply. There are certainly infinite subsets of $S^2$ which are not subsets of a great circle which embed isometrically in $\ell_1$. 
For example, if $X$ consists of half the equatorial circle, together with the north and south poles, then one can isometrically embed $X$ in $\ell_1^2$ as the line from $(0,\frac{\pi}{2})$ to $(\frac{\pi}{2},0)$, together with the `poles' $(0,0)$ and $(\frac{\pi}{2},\frac{\pi}{2})$.

A more interesting example is the set
  \[ X = \left\{\vecx_k = \Bigl(\frac{\pi}{2^k},\frac{\pi}{2}\Bigr),\ \vecx^{\ast}_k = \Bigl(\pi+\frac{\pi}{2^k},\frac{\pi}{2} \Bigr) \st k = 0,1,2,\dots \right\} \cup \{\vecz = (0,0)\} \subseteq S^2, \]
which consists of countably many antipodal pairs on the equatorial circle, plus the north pole, which is equidistant from all of the other points. Following the algorithm given in the proof of Theorem~\ref{halfcircle_plus}, we can embed $\{\vecx_k,\vecx^{\ast}_k\}_{k=0}^\infty$ in $\ell_{1}$ by setting
   \begin{align*}
    \gamma(\vecx_k) &= \begin{cases}
     \big(\frac{\pi}{2}, \frac{\pi}{4}, \frac{\pi}{8},\dots\bigr), & k =0, \\
       \big(0,\dots,0,\frac{\pi}{2^{k+1}}, \frac{\pi}{2^{k+2}},\dots\bigr), & k \ge 1,
       \end{cases} \\
    \gamma(\vecx^{\ast}_k) &= \gamma(\vecx_0) - \gamma(\vecx_k), \qquad  k \ge 0.
   \end{align*}
One can then extend this to an isometric map from all of $(X,\delta)$ into $\ell_1$ by setting
  \[ \gamma(\vecz) = \Big(\frac{\pi}{4}, \frac{\pi}{8}, \frac{\pi}{16},\dots\Bigr).\]
This particular construction however would not allow one to embed $X \cup \{(0,\pi)\}$ isometrically into $\ell_{1}$, where we are adding in the south pole.

An intriguing class of metric spaces in this setting is provided by circles of constant latitude. For $0 < s < \frac{\pi}{2}$, let
    $S_s = \{(\theta,\phi) \in S^2 \st \phi = s\}$,
equipped with the metric inherited from $S^2$. It seems likely that as $s$ goes from $0$ to $\frac{\pi}{2}$, $\wp(S_s,\delta)$ decreases from $2$ to $1$, although we have no proof of this. Do any nontrivial subsets of $S_s$ embed isometricaly into $\ell_1$?

\bibliographystyle{amsalpha}

\end{document}